\documentclass[a4paper,reqno,10pt]{amsart}
\usepackage[utf8x]{inputenc}
\usepackage[colorlinks=true, pdfstartview=FitV, linkcolor=purple, citecolor=purple]{hyperref}
\usepackage{amsmath}
\usepackage{amsthm}
\usepackage[foot]{amsaddr}
\usepackage{amssymb}
\usepackage{academicons}
\usepackage{lineno}
\usepackage{cleveref}
\usepackage{bbm}
\usepackage{pictex}
\usepackage{tikz}
\usepackage{pgfplots}
\usepackage{subfigure}
\usetikzlibrary{shapes, positioning, arrows.meta,calc}
\usepackage[margin=0.8in]{geometry}
\usepackage{tabularx}
\usepackage{float}
\usepackage{import}
\usepackage{enumitem}

\usepackage{listings}
\usepackage{xcolor} 


\newtheorem{theorem}{Theorem}[section]
\newtheorem{lemma}[theorem]{Lemma}
\newtheorem{proposition}[theorem]{Proposition}

\newtheorem{remark}{Remark}

\pgfplotsset{compat = newest}

\newcommand\floor[1]{\left\lfloor #1 \right\rfloor}

\definecolor{orcidlogocol}{HTML}{A6CE39}
 
\title{On a tree of rational functions related to continued fractions}

\author{Niels Langeveld}
\address[Niels Langeveld]{Lehrstuhl f\"{u}r Mathematik und Statistik, Montanuniversit\"{a}t Leoben, Leoben, Austria}
\email{niels.langeveld@unileoben.ac.at}
\author{David Ralston}
\address[David Ralston]{
Mathematics, Computer \& Information Science, SUNY Old Westbury, New York, USA }
\email{ralstond@oldwestbury.edu}

\begin{document}
\begin{abstract}
    In this article, we present a binary tree with vertices given by rational functions $p(x)/q(x)$; the root and functional derivation of children are inspired by continued fractions. We prove some special properties of the tree. For example, the  zero solutions of the denominators $q(x)$ are all real negative numbers and are dense in $(-\infty,-1]$. For $x>0$ functions are non intersecting and form a dense subset of $(0,1)$. Furthermore, when evaluating the tree for positive rational values, the tree contains every rational in $(0,1)$ exactly once if and only if $x\in \mathbb{N}$. For $x=1$, one finds back the classical Farey tree which is related to regular continued fractions. In the last part, we will  make a similar tree in a similar way but for backward continued fractions. We highlight some similarities and differences.
\end{abstract}
\maketitle
\section{Introduction}
Let us first define the tree of interest which we will call the \textit{Farey polynomial tree} (an explanation of the derivation of this tree will be given in the beginning of \Cref{sec:cfs}). On the nodes we have rational functions $v(x)=p(x)/q(x)$. The root of the tree is $\frac{x}{x+1}$ and the two offspring of each node are found using the following two functions:
\begin{equation}\label{eqn: Phi}\begin{split}
\Phi_0\left( \frac{p(x)}{q(x)}\right) &= \begin{cases}
\frac{xq(x)}{xq(x)+p(x)} & p(0)\neq 0\\
\frac{q(x)}{q(x)+p(x)/x}& p(0)=0\end{cases}\\
\\
\Phi_1\left( \frac{p(x)}{q(x)}\right) &= \begin{cases}
\frac{xp(x)}{xq(x)+p(x)} & p(0)\neq 0\\
\frac{p(x)}{q(x)+p(x)/x}& p(0)=0.\end{cases}\end{split}\end{equation}

See Figure \ref{fig:FareyPoly} for the first few levels. These two functions are the inverse branches of a generalized Farey map and the tree generalizes the classical Farey tree which is found for $x=1$. Note that we define it by using inverse images of the Farey map and not by using mediants of neighboring fractions, but the trees are intimately related (see \cite{BI09}). The Farey tree is well studied and appears in many different branches of mathematics (see for example~\cite{AK22,BBDG24,BDM21,DKS25,DS07,KO86,LRS17} and the references therein). Curiously, Farey himself (a geologist) did not publish anything significant on the matter. It was Cauchy who proved one of the basic ideas of the Farey sequence and attributed it to Farey (see \cite[Notes on Chapter~3]{HW79}).

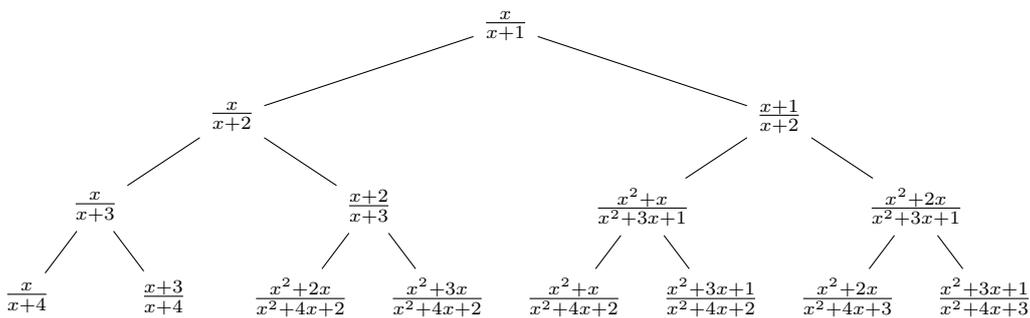
\begin{figure}[ht]
    \begin{center}

\begin{tikzpicture}[
    level 1/.style = {sibling distance=6cm},
    level 2/.style = {sibling distance=3cm},
    level 3/.style = {sibling distance=1.5cm},
    level distance          = 1cm,
    edge from parent/.style = {draw},
    scale=1.2
    ]

    \node {$\frac{x}{x+1}$}
    child{
      node {$\frac{x}{x+2}$}
      child{
        node {$\frac{x}{x+3}$}
        child{
          node {$\frac{x}{x+4}$}
        }
        child{
          node {$\frac{x+3}{x+4}$}
        }
      }
      child{
        node {$\frac{x+2}{x+3}$}
        child{
          node {$\frac{x^2+2x}{x^2+4x+2}$}
        }
        child{
          node {$\frac{x^2+3x}{x^2+4x+2}$}
        }
      }
    }
    child{
      node {$\frac{x+1}{x+2}$}
      child{
        node {$\frac{x^2+x}{x^2+3x+1}$}
        child{
          node {$\frac{x^2+x}{x^2+4x+2}$}
        }
        child{
          node {$\frac{x^2+3x+1}{x^2+4x+2}$}
        }
      }
      child{
        node {$\frac{x^2+2x}{x^2+3x+1}$}
        child{
          node {$\frac{x^2+2x}{x^2+4x+3}$}
        }
        child{
          node {$\frac{x^2+3x+1}{x^2+4x+3}$}
        }
      }
    };
\end{tikzpicture}
     
        \caption{The first four levels of the Farey polynomial tree.}\label{fig:FareyPoly}
    \end{center}
\end{figure}

We label each vertex in the tree with notation inspired by continued fractions. For $a_i \in \mathbb{Z}^+$:
\begin{equation}\label{eqn: vertex labels}
[a_1,a_2,\ldots,a_k]_x = \Phi_1^{a_1-1} \circ \Phi_0 \circ \Phi_1^{a_2-1} \circ \Phi_0 \circ \cdots \circ \Phi_1^{a_{k-1}-1}\circ \Phi_0 \circ \Phi_1^{a_k-1} \left( \frac{1}{1}\right)
\end{equation}

In this way the root of our tree is the vertex $[2]_x=[1,1]_x$, and in general $[a_1,\ldots,a_{k-1},a_k+1]_x=[a_1,\ldots,a_{k-1},a_k,1]_x$. Observe also that
\begin{align*}
    \Phi_0\left( [a_1,\ldots,a_k]_x\right) &= [1,a_1,\ldots,a_k]_x\\
    \Phi_1\left( [a_1,\ldots,a_k]_x\right) &= [a_1+1,a_2,\ldots,a_k]_x\\
\end{align*}The tree we present here has some very nice properties. We highlight some of them with the following two theorems; by ``subtree at vertex $[a_1,\ldots,a_k]_x$" we mean the tree generated by $\Phi_1$ and $\Phi_0$ rooted at vertex $[a_1,\ldots,a_k]_x$.

We have the following two main theorems.

\begin{theorem}\label{th:cfs}
For a fixed value of $x$, we consider the function which maps the tree to the set of values given by evaluating every vertex at this value of $x$.
    \begin{enumerate}
        \item Setting a value $x>0$ injectively maps the tree to a dense subset of $(0,1)$.
        \item Setting a value $x>0$ bijectively maps the tree to $\mathbb{Q} \cap (0,1)$ if and only if $x \in \mathbb{Z}^+$.
    \end{enumerate}
\end{theorem}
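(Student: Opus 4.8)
The plan is to pass from the rational functions to their values and reduce the whole statement to one-dimensional Möbius dynamics.

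\emph{Setup.} Fix $x>0$. By the case distinction in \eqref{eqn: Phi}, the value at $x$ of $\Phi_0(p/q)$ (resp.\ $\Phi_1(p/q)$) depends only on $v=p(x)/q(x)$, namely it is $\varphi_0(v)=\frac{x}{x+v}$ (resp.\ $\varphi_1(v)=\frac{xv}{x+v}$). These are strictly monotone Möbius self-maps of $(0,1)$ with disjoint images $\varphi_0((0,1))=(\frac{x}{x+1},1)$ and $\varphi_1((0,1))=(0,\frac{x}{x+1})$, and they are the two inverse branches of the generalized Farey (full-branch) map $F=F_x\colon[0,1]\to[0,1]$, $F(c)=\frac{xc}{x-c}$ on $[0,\frac{x}{x+1}]$ and $F(c)=\frac{x(1-c)}{c}$ on $[\frac{x}{x+1},1]$. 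An easy induction on depth (using $x+v>0$ to keep all denominators nonzero) shows that the vertex reached from the root by the address $w=w_1\cdots w_n\in\{0,1\}^{*}$ has value $\varphi_{w_n}\circ\cdots\circ\varphi_{w_1}(\tfrac{x}{x+1})\in(0,1)$. Write $\varphi_u:=\varphi_{u_1}\circ\cdots\circ\varphi_{u_m}$ and $J_u:=\varphi_u((0,1))$ for $u\in\{0,1\}^{m}$; then $J_u=J_{u0}\sqcup\{\varphi_u(\tfrac{x}{x+1})\}\sqcup J_{u1}$ with $J_{u0},J_{u1}$ disjoint, and, reversing addresses, the set of all vertex values at $x$ equals $\{\varphi_u(\tfrac{x}{x+1}):u\in\{0,1\}^{*}\}$, each $\varphi_u(\tfrac{x}{x+1})$ lying in $J_u$.

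\emph{Part (1).} Injectivity follows from the nested-interval structure: two distinct addresses become, after reversal, distinct words $u\ne u'$; if one is a prefix of the other, injectivity of the outer map together with $\varphi_\rho(\tfrac{x}{x+1})\ne\tfrac{x}{x+1}$ separates the two values, and otherwise they lie in the disjoint cylinders $J_{\sigma 0}$ and $J_{\sigma 1}$, where $\sigma$ is the longest common prefix (in particular all rational functions in the tree are pairwise distinct). Density is equivalent to the single claim that $\sup_{|u|=m}\operatorname{diam}J_u\to 0$ as $m\to\infty$: granting it, any $c\in(0,1)$ either is a vertex value itself or lies in a nested sequence $J_{u^{(m)}}$, whence the vertex values $\varphi_{u^{(m)}}(\tfrac{x}{x+1})$ converge to $c$. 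This diameter bound is the analytic heart of the theorem, and I expect it to be the main obstacle: $F$ is uniformly expanding away from its unique parabolic fixed point $0$, and the neutral direction at $0$ is controlled by the exact identity $\varphi_1^{\,m}((0,1))=(0,\tfrac{x}{x+m})$; combining this with a uniform-contraction estimate for composed Möbius maps outside of long $\varphi_1$-runs gives $\operatorname{diam}J_u\to 0$ (for $x\ge 1$ a short direct computation suffices, and $x=1$ is the classical Farey/Stern--Brocot case).

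\emph{Part (2), the implication ``$x\in\mathbb{Z}^{+}\Rightarrow$ bijection''.} For $x=n\in\mathbb{Z}^{+}$ all $\varphi_i$ have rational coefficients and the root value is rational, so every vertex value lies in $\mathbb{Q}\cap(0,1)$; injectivity is part (1), so only surjectivity onto $\mathbb{Q}\cap(0,1)$ remains. From the setup (each $c\ne\tfrac{x}{x+1}$ lies in exactly one of the disjoint cylinders $J_0,J_1$ and $F$ inverts the corresponding branch), $c\in(0,1)$ is a vertex value if and only if the forward $F$-orbit of $c$ reaches the root value $\tfrac{n}{n+1}$. I would prove by strong induction on the denominator that every $a/b\in\mathbb{Q}\cap(0,1)$ in lowest terms has this property. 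If $a/b\ne\tfrac{n}{n+1}$ and $a/b>\tfrac{n}{n+1}$, then $F(a/b)=\tfrac{n(b-a)}{a}\in(0,1)$ has denominator at most $a<b$. If $a/b<\tfrac{n}{n+1}$, then along the lower branch $F^{k}(a/b)=\tfrac{na}{\,nb-ka\,}$; computing the largest admissible $k$ shows that iterating $F$ through the whole maximal lower run and one further step lands either on $\tfrac{n}{n+1}$ itself or on $\tfrac{r}{a}$ with $r=n(b-a)\bmod a\in\{1,\dots,a-1\}$, again of denominator $<b$. In every case the orbit reaches the root or a fraction of strictly smaller denominator, closing the induction; the batched-run identity and the computation of $k$ are the core of this step.

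\emph{Part (2), the implication ``bijection $\Rightarrow x\in\mathbb{Z}^{+}$''.} Suppose $x\notin\mathbb{Z}^{+}$. If $x\notin\mathbb{Q}$, the root value $\tfrac{x}{x+1}$ is irrational, so the image is not even contained in $\mathbb{Q}$. If $x=p/q$ with $q\ge 2$ and $\gcd(p,q)=1$, fix a prime $\ell\mid q$ and show by induction along the tree that every vertex value, in lowest terms $a/b$, has $\ell\nmid b$: the root value $\tfrac{p}{p+q}$ satisfies $\ell\nmid p+q$ because $\ell\nmid p$, and $\varphi_0(a/b)=\tfrac{pb}{pb+qa}$, $\varphi_1(a/b)=\tfrac{pa}{pb+qa}$ both have denominator $pb+qa\equiv pb\not\equiv 0\pmod{\ell}$. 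Hence $1/\ell\in\mathbb{Q}\cap(0,1)$ is never attained, so the evaluation is not onto $\mathbb{Q}\cap(0,1)$ and a fortiori not a bijection onto it. Together with the previous implication and the injectivity from part (1), this proves (2). The only genuinely delicate point in the whole argument is the cylinder-diameter estimate for part (1); everything after the dynamical reformulation is elementary.
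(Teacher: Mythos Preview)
Your overall architecture matches the paper's: reduce everything to the dynamics of the generalized Farey map $F_x$ and its inverse branches $\varphi_0,\varphi_1$. Injectivity via the disjoint-cylinder structure is fine, and your treatment of part~(2) is essentially the paper's argument with cosmetic differences (you iterate the slow map $F$ and reduce the denominator, the paper iterates the accelerated map $T_x$ and reduces the numerator; you work modulo a prime $\ell\mid q$, the paper works modulo $q$ itself). You also make explicit the irrational-$x$ case of~(2), which the paper leaves implicit.

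The genuine gap is in the density half of part~(1). Your sketch rests on the assertion that ``$F$ is uniformly expanding away from its unique parabolic fixed point $0$'', so that outside long $\varphi_1$-runs one has a uniform contraction for compositions. For $0<x<1$ this premise is false: from $F_{x,0}(t)=x(1-t)/t$ one gets $|F_{x,0}'(1)|=x<1$, equivalently $|\varphi_0'(0)|=1/x>1$, so $\varphi_0$ \emph{expands} near $0$ and no uniform-contraction estimate of the type you invoke is available. Thus the step you yourself flag as ``the main obstacle'' does not go through as written in the regime $x\in(0,1)$. The paper handles exactly this case separately (its \Cref{prop: convergence for 0<x<1}): using the recursion $q_n=(x+a_n-1)q_{n-1}+xq_{n-2}$ it proves by induction the lower bounds $q_n>\tfrac{n}{2}x^{(n+2)/2}+x^{n/2}$ (even $n$) and $q_n>\tfrac{n+1}{2}x^{(n+1)/2}$ (odd $n$) in the worst case $a_i\equiv 1$, whence $|t-p_n/q_n|<x^n/q_n^2\to 0$. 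That convergent estimate is precisely the cylinder-diameter bound you need; if you want to keep your Möbius/IFS viewpoint, this is the computation you must supply to close the gap.
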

Note that from (1) in Theorem \ref{th:cfs} it follows that the rational functions in the tree don't intersect on $(0,\infty)$.

\begin{theorem}\label{th:dense}
    For the subtree $\Omega$ rooted at any vertex $[a_1,\ldots,a_k]_x$, if we set
    \[R=\left\{ x \, | \, \textrm{there is a } p(x)/q(x) \in \Omega \textrm{ for which }q(x)=0\right\}\]
    then $R$ is dense in $(-\infty,-1]$.
\end{theorem}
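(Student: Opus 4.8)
The plan is to translate the statement into the dynamics of M\"obius transformations and then, inside any prescribed subinterval of $(-\infty,-1]$, to produce a pole of some vertex of $\Omega$ by a winding argument. The starting point is that, evaluated at a fixed $x=\xi$, the maps $\Phi_0$ and $\Phi_1$ act on the \emph{value} of a vertex by the M\"obius transformations $\phi_0^{\xi}(t)=\xi/(\xi+t)$ and $\phi_1^{\xi}(t)=\xi t/(\xi+t)$, with matrices $A_0=\left(\begin{smallmatrix}0&\xi\\ 1&\xi\end{smallmatrix}\right)$ and $A_1=\left(\begin{smallmatrix}\xi&0\\ 1&\xi\end{smallmatrix}\right)$; this is a one-line check in \eqref{eqn: Phi}, the two cases of the definition collapsing to one formula. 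Consequently, for a finite word $w$ in $\{0,1\}$ and the associated composition $\Phi_w$ of tree maps, the rational function $\Phi_w(v)$ has a pole at $\xi$ precisely when $\phi_w^{\xi}(v(\xi))=\infty$; and because the matrix of $\phi_w^{\xi}$ has determinant a power of $-\xi$, hence is nonsingular for $\xi\neq0$, this pole is not destroyed on reducing the fraction, so it really lies in $R$.

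I would then single out convenient words. A short induction gives $A_0A_1^{\,m}=\left(\begin{smallmatrix} m\xi^m & \xi^{m+1}\\ (m+1)\xi^m & \xi^{m+1}\end{smallmatrix}\right)$, with trace $\xi^m(m+\xi)$ and determinant $-\xi^{2m+1}$, the latter positive for $\xi<0$. Hence $\phi_0^{\xi}\circ(\phi_1^{\xi})^{\circ m}$ is elliptic exactly when $(m+\xi)^2/(-\xi)<4$, i.e. for $\xi\in I_m:=\bigl(-(m+2)-2\sqrt{m+1},\ -(m+2)+2\sqrt{m+1}\bigr)$. Since $I_0=(-4,0)$ and consecutive $I_m$ overlap, $\bigcup_{m\ge0}I_m=(-\infty,0)\supseteq(-\infty,-1]$, so every $\xi<0$ lies in some $I_m$. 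Moreover on $I_m$ one computes $\mathrm{tr}/(2\sqrt{\det})=(-1)^m(m-|\xi|)/(2\sqrt{|\xi|})$, a strictly monotone function of $\xi$; so as $\xi$ runs through $I_m$, the elliptic map $\phi_0^{\xi}\circ(\phi_1^{\xi})^{\circ m}$ runs through a family of rotations whose rotation number $\theta(\xi)$ varies strictly monotonically.

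Now let $\Omega$ be the subtree rooted at a vertex $v_0$ and let $(\alpha,\beta)\subseteq(-\infty,-1]$ be an arbitrary nonempty open interval. Pick $m$ with $I_m\cap(\alpha,\beta)\neq\emptyset$, let $J\subseteq I_m\cap(\alpha,\beta)$ be a nonempty open interval containing no pole of $v_0$, and let $w$ be the word consisting of a $0$ followed by $m$ ones, so $\Phi_w^{\circ p}(v_0)\in\Omega$ for all $p\ge1$. For $\xi\in J$ the map $g_\xi:=\phi_0^{\xi}\circ(\phi_1^{\xi})^{\circ m}$ is elliptic; conjugating $g_\xi$ to a rotation of the circle $\mathbb{R}\cup\{\infty\}$ by a M\"obius map depending continuously on $\xi$ (via the fixed point of $g_\xi$ in the upper half-plane, which varies real-analytically) and passing to continuous lifts $\mathbb{R}\to\mathbb{R}$, the equation $g_\xi^{\circ p}(v_0(\xi))=\infty$ becomes $p\,\theta(\xi)+s_0(\xi)-s_\infty(\xi)+r_\xi\in\mathbb{Z}$, where $\theta$ is the strictly monotone rotation number, $s_0,s_\infty$ are continuous lifted circle-coordinates of $v_0(\xi)$ and of $\infty$, and $|r_\xi|$ is bounded uniformly on $\overline{J}$. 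Since $\theta$ is strictly monotone on $J$, the oscillation over $J$ of the left-hand side is at least $p\cdot\bigl(\max_{\overline J}\theta-\min_{\overline J}\theta\bigr)$ minus a constant independent of $p$, hence exceeds $1$ once $p$ is large; by continuity it then attains an integer value at some $\xi^{\ast}\in J$, which is therefore a pole of $\Phi_w^{\circ p}(v_0)\in\Omega$ and lies in $(\alpha,\beta)$. As $(\alpha,\beta)$ was arbitrary, $R$ is dense in $(-\infty,-1]$.

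I expect the real work to be in the last paragraph: arranging the conjugacy to a rotation (and the circle-coordinates $s_0,s_\infty$) to depend continuously on $\xi$ and bounding the remainder $r_\xi$ uniformly on $\overline J$, so that the winding count is legitimate; the upper half-plane picture, where the attracting/repelling data collapses to a single real-analytically varying fixed point, is the clean way to secure both. The other points are routine: non-cancellation of the constructed pole was handled above via nonsingularity of the M\"obius matrix for $\xi\neq0$, and $J$ can avoid the finitely many poles of $v_0$. It is also worth noting that $\phi_0$ alone (the case $m=0$, $w=0$) already handles all of $(-4,-1]$, so the longer words $w$ are needed only to reach $\xi\le-4$.
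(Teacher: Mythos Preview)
Your argument is correct and takes a genuinely different route from the paper's.

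The paper fixes a target $\alpha<-1$ and builds an explicit path $v_0,v_1,\ldots$ in the subtree by a greedy digit choice $b_{i+1}=\max\{1,1-\lfloor\alpha+v_i(\alpha)\rfloor\}$; it then argues by contradiction that the nearest poles $\zeta_i$ must converge to $\alpha$, because otherwise the derivatives $(x+v_i(x))'$ would blow up geometrically like $(-\alpha)^i$ on the gap interval $[S,\alpha]$, contradicting an a priori bound of $1/(\alpha-S)$ on their average. This relies on the structural results about interlacing of roots and poles (the analogue of Propositions on root ordering) built up beforehand.

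Your approach is dynamical rather than algebraic: you fix a small parameter interval $J$ and a single word $w=0\,1^m$, observe that the induced M\"obius map $g_\xi$ on $\mathbb{R}\cup\{\infty\}$ is elliptic with strictly monotone rotation number on $J$, and use an intermediate-value/winding argument to force $g_\xi^{\,p}(v_0(\xi))$ to hit $\infty$ for some $\xi\in J$ once $p$ is large. This bypasses all of the root-interlacing machinery and the derivative estimates; the only algebra you need is the trace/determinant computation for $A_0A_1^{\,m}$, and the only analytic input is the continuous (indeed real-analytic) dependence of the upper-half-plane fixed point on $\xi$, which makes the conjugacy to a rotation and the lifted coordinates $s_0,s_\infty$ continuous on a compact $\overline{J}\subset I_m$. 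Two small points worth making explicit in a final write-up: choose $J$ with $\overline{J}\subset I_m$ so that ellipticity (and hence the conjugacy) persists uniformly, and note that with the natural Cayley conjugacy $C_\xi(z)=(z-z_0(\xi))/(z-\bar z_0(\xi))$ the term $r_\xi$ is actually zero, so the bookkeeping is cleaner than you suggest.

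What each approach buys: the paper's construction is more explicit---it hands you a concrete sequence of vertices whose poles converge to a prescribed $\alpha$, and the derivative estimate hints at a rate. Your argument is shorter and more robust; it does not need the delicate ordering of roots and poles, and the same template (find a word whose M\"obius matrix is elliptic with varying rotation number) would transfer to other trees of M\"obius type, including potentially the backward tree in the paper's final section, where the paper's interlacing picture breaks down.
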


We begin \Cref{sec:cfs} with a derivation of this tree. For Theorem \ref{th:cfs} we use the close relationship between the tree and a family of continued fractions explained in Section \ref{sec:cfresults}. Theorem \ref{th:dense} is proven in \Cref{sec:dense} by inductively selecting a path in the tree whose vertices have poles converging to an arbitrary $\alpha<-1$; comments are included regarding the possibility of expanding this result to $\alpha \in (-1,0)$. A similar tree derived from the theory of backwards continued fractions is presented in \Cref{sec:other trees}.
We will see a similar behaviour in view of Theorem \ref{th:cfs} and a very different one in view of Theorem \ref{th:dense} for this tree.

\section{Forward Farey Tree}\label{sec:cfs}
\subsection{Background and Definition}
The \textit{Gauss map} $T:[0,1) \mapsto [0,1)$, given by
\[T(t) = \begin{cases}
    \frac{1}{t} - \floor{\frac{1}{t}} & t\neq 0\\
    0 & t=0,
\end{cases}\]
may be used to generate the \textit{regular continued fraction} expansion of $t$:
\[ t = \cfrac{1}{a_1+\cfrac{1}{a_2+\cfrac{1}{a_3+\ddots}}},\]
where $a_i =a(T^{i-1}(t))= \floor{1/T^{i-1}(t)}$. This map is the `sped-up' version of the \textit{Farey map} $F$, given by
\[ F(t) = \begin{cases}
    F_0(t) = \frac{1-t}{t} & x \geq 1/2\\
    F_1(t) = \frac{t}{1-t} & x \leq 1/2
\end{cases}\]
with the observation that 
\begin{equation}\label{eqn: RCF functional}T(t) = F_0 \circ F_1 ^{a(t)-1} (t).\end{equation}
Note that $a(t)-1=\min(n\in\mathbb{N}_0 : T^n(t)\in [\frac{1}{2},1])$ i.e. it is first hitting time of $x$ to the interval $[\frac{1}{2},1]$ (possibly zero) and $T$ can be understood as the composition of $F_0$ and the induced transformation on this interval (where a return time of zero is allowed).

A generalization of regular continued fractions, $N$-continued fractions, allow for all numerators to be some fixed $N \in \mathbb{Z}\setminus\{0\}$. These were first studied in  \cite{AW11,BGRKWY208} and later in many other papers such as \cite{CK,DO18,DKW13,JKN,KL} . Most papers study positive values of $N$ and in the remainder of this section we will also take $N>0$. For negative $N$, there is a relation with the backward continued fractions in Section \ref{sec:other trees}. One of the key differences with regular continued fractions is that there is not a unique continued fraction for every $t\in(0,1)$. For some studied algorithms generating $N$-continued fractions it is proven that there are quadratic irrationals without periodic expansion (and even rationals with aperiodic expansions), see \cite{KL}. For the greedy $N$-continued fractions, which we have here, there is strong numeric evidence that there are quadratic irrationals with an aperiodic expansion (see \cite{DKW13}), though this problem is still open. The greedy $N$-continued fractions can be generated with a similar map as the Gauss map namely by 
\[ T_N(t) = \begin{cases}
   \left(\frac{N}{t}-N\right) - \floor{\frac{N}{t}-N} & t \neq 0\\
     0 & t=0,
\end{cases}\]
which generates
\[ t= \cfrac{N}{N+(a_1-1)+\cfrac{N}{N+(a_2-1)+\cfrac{N}{N+(a_3-1)+\ddots}}},\]
where now $a_i=a(N,t)= \floor{N/T_N^{i-1}(t)-N}+1$. Then $T_N(x)$ is a sped-up version of the associated $N$-Farey map
\[F_N(t) = \begin{cases}
    F_{N,0}(t) = \frac{N(1-t)}{t} &  t \geq \frac{N}{N+1} \\
    F_{N,1}(t) = \frac{Nt}{N-t} & t \leq \frac{N}{N+1}
\end{cases}\]
with the observation that
\begin{equation}\label{eqn: N-CF functional}T_N(t) = F_{N,0} \circ F_{N,1}^{a(N,t)-1}(t).\end{equation}
See Figure \ref{fig:Fareymap} (right) for an example of $N=2$. It is important to point out that this definition of $a_i=a(N,t)$ is not typical: more typical, e.g. in the above references for N-continued fractions, is that $a(N,t)=\floor{N/t}$ and $T_N(t)$ is equivalent but without the `$-N$' both inside and outside the integer part function. Our definition here differs only by an integer constant for $a$, is identical for $T_N$, gives a similar presentation between \Cref{eqn: RCF functional} and (\ref{eqn: N-CF functional}), and is what we will now generalize. 

Note that from a dynamical point of view, there is no necessity of taking $N\in\mathbb{Z}^+$ in the definition of the map $F_N(t)$. To this end, let us define a map $F_x:[0,1]\rightarrow[0,1]$ with $x\in(0,\infty)$\footnote{note that for other values of $x$, we have that $\frac{x}{x+1}$ is not in $[0,1]$  and the map cannot be used to generate continued fractions.} as
\begin{equation}\label{eq:F_x}
  F_x(t)=
\begin{cases}
F_{x,0}(t)=\frac{x(1-t)}{t}   & t \geq \frac{x}{x+1}\\[1ex]
F_{x,1}(t)=\frac{xt}{x-t}, & t \leq \frac{x}{x+1},
\end{cases}  
\end{equation}
see Figure \ref{fig:Fareymap} (left) for $x=1/3$.
\begin{figure}[ht]
		\centering
		
		\subfigure{\begin{tikzpicture}[scale=5]
				\draw[white] (-0.25,0)--(1,0);
				\draw(0,0)node[below]{\small $0$}--(1,0)node[below]{\small $1$}--(1,1)--(0,1)node[left]{\small $1$}--(0,0);

				\draw[thick, blue, smooth, samples =20, domain=1/4:1] plot(\x,{1/(3*\x)-1/3});
				\draw[thick,blue, smooth, samples =20, domain=0:1/4] plot(\x,{\x/(1-3*\x)});

				\draw[dotted](1/4,0)node[below]{\small $\frac{1}{4}$}--(1/4,1);
				
		\end{tikzpicture}}
        \subfigure{\begin{tikzpicture}[scale=5]
				\draw(0,0)node[below]{\small $0$}--(1,0)node[below]{\small $1$}--(1,1)--(0,1)node[left]{\small $1$}--(0,0);

				\draw[thick, blue, smooth, samples =20, domain=2/3:1] plot(\x,{2/\x-2});
				\draw[thick,blue, smooth, samples =20, domain=0:2/3] plot(\x,{2*\x/(2-\x)});

				\draw[dotted](2/3,0)node[below]{\small $\frac{2}{3}$}--(2/3,1);
				
		\end{tikzpicture}}
		
		\caption{The map $F_x$ for $x=1/3$ on the left and $x=2$ on the right. \hspace{5.5em} }
		\label{fig:Fareymap}
	\end{figure}

The sped-up version of this map will be the induced transformation on $(\frac{x}{x+1},1]$. This leads us to the following Gauss like map.
Let $x\in (0,\infty)$ and define the map 
$ T_x\colon[0,1]\to[0,1]$ as
\begin{equation}\label{eq:T_x}
  T_x(t)=
\begin{cases}
\left(\dfrac{x}{t}-x\right)- \floor{\dfrac{x}{t}-x}
 
& t\neq0,\\[1ex]
0 & t=0.
\end{cases}  
\end{equation}
See Figure \ref{fig:Tx} for the graph for $x=2$, $5/2$ and $\pi$.
\begin{figure}[ht]
\centering
\subfigure[$T_{2}$]{\begin{tikzpicture}[scale=2.2]
\draw[white] (-.6,0)--(1.5,0);
\draw(0,0)node[below]{\tiny $0$}--(1,0)node[below]{\tiny $1$}--(1,1)--(0,1)node[left]{\tiny $1$}--(0,0);

\foreach \n in {2,...,7}{%
	 \pgfmathsetmacro{\xval}{2/(\n+1)};
    \draw[dotted] (\xval,0)--(\xval,1);
     \draw[thick, purple!50!black, smooth, samples =20, domain=2/(\n+1):2/\n] plot(\x,{2 / \x -\n});
}
\draw(2/3,0)node[below]{\tiny $\frac{2}{3}$};
\draw(2/4,0)node[below]{\tiny $\frac{2}{4}$};
\draw(2/5,0)node[below]{\tiny $\frac{2}{5}$};

\fill[purple!50!black] (0.05, 0.5) circle (0.5pt); 
\fill[purple!50!black] (0.125, 0.5) circle (0.5pt); 
\fill[purple!50!black] (0.2, 0.5) circle (0.5pt); 

\end{tikzpicture}}
\subfigure[$T_{5/2}$]{\begin{tikzpicture}[scale=2.2]
\draw[white] (-.6,0)--(1.5,0);
\draw(0,0)node[below]{\tiny $0$}--(1,0)node[below]{\tiny $1$}--(1,1)--(0,1)node[left]{\tiny $1$}--(0,0);

\draw(2/3,0)node[below]{\tiny $\frac{5}{7}$};
\draw(2/4,0)node[below]{\tiny $\frac{5}{9}$};
\draw(2/5,0)node[below]{\tiny $\frac{5}{11}$};

\foreach \n in {2,...,7}{%
	 \pgfmathsetmacro{\xval}{2.5/(\n+1.5)};
    \draw[dotted] (\xval,0)--(\xval,1);
     \draw[thick, purple!50!black, smooth, samples =20, domain=2.5/(\n+1.5):2.5/(\n+0.5)] plot(\x,{2.5 / \x -0.5 -\n});
}

\fill[purple!50!black] (0.05, 0.5) circle (0.5pt); 
\fill[purple!50!black] (0.125, 0.5) circle (0.5pt); 
\fill[purple!50!black] (0.2, 0.5) circle (0.5pt); 

\end{tikzpicture}}
\subfigure[$T_\pi$]{\begin{tikzpicture}[scale=2.2]
\draw[white] (-.6,0)--(1.5,0);
\draw(0,0)node[below]{\tiny $0$}--(1,0)node[below]{\tiny $1$}--(1,1)--(0,1)node[left]{\tiny $1$}--(0,0);

\draw({3.1415/(4.1415)},0)node[below]{\tiny $\frac{\pi}{\pi+1}$};

\foreach \n in {3,...,9}{%
	 \pgfmathsetmacro{\xval}{pi/(\n+1+(pi - floor(pi))))};
    \draw[dotted] (\xval,0)--(\xval,1);
     \draw[thick, purple!50!black, smooth, samples =20, domain=pi/(\n+1+(pi - floor(pi))):pi/(\n +(pi - floor(pi)))] plot(\x,{pi / \x -(pi - floor(pi)) -\n});
}
\fill[purple!50!black] (0.05, 0.5) circle (0.5pt); 
\fill[purple!50!black] (0.125, 0.5) circle (0.5pt); 
\fill[purple!50!black] (0.2, 0.5) circle (0.5pt); 

\end{tikzpicture}}

\caption{The map $T_x$ for $x=2$, $5/2$ and $\pi$.}
\label{fig:Tx}
\end{figure}
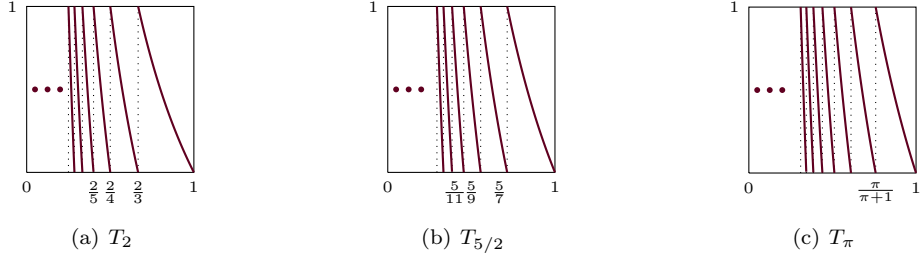

Furthermore, set
$
a(t)=\bigl\lfloor\tfrac{x}{t} -x\bigr\rfloor+1
$ and $
a_n(t)=a\bigl(T_x^{\,n-1}(t)\bigr) $ for $(n\ge2),$ so that $T_x(t) = x/t-x-(a_1(t)-1)$, and just as with $T$ and $T_N$ we have
\[T_x(t) = F_{x,0}\circ F_{x,1}^{a(x,t)-1}(t).\] 

The `$+1$' in the definition of $a_1$ is to ensure that $a_1(t) \in \mathbb{Z}^+$; as previously remarked, $\floor{x/t-x}$ yields the number of applications of $F_{x,1}$ to orbit $t$ into $(x/(x+1),1]$, and then a single extra application of $F_{x,0}$ completes the `sped-up' map. Note also that for $x=1$ one recovers the regular continued fraction algorithm. Then for $t\in(0,1)$ one obtains 
\[ t = \frac{x}{{x}+(a_1(t)-1)+T_x(t)}\]
from which we obtain the $x$–continued fraction expansion
\[
t
=\cfrac{x}{x+(a_1-1)+\cfrac{x}{x+ (a_2-1)+\cfrac{x}{x+(a_3-1)+\ddots}}}
:=\bigl[a_1,a_2,a_3,\dots\bigr]_x,
\]
which is finite if and only if there exists an $n$ such that $T_x^n(t)=0$. 

When we remove the `$-x$' outside as well as inside the integer part in \eqref{eq:T_x} then we get the generalization of the regular continued fractions as in \cite{GS17,M20}. Their approach has the advantage of having natural numbers as the digits:
\[t = \cfrac{x}{a_1+\cfrac{x}{a_2+\ddots}}\] where the $a_i$ are positive integers. On the other hand, the non-full branch they have close to $1$ makes that generalization difficult to study. In our setting all branches are full and many properties of the regular continued fractions are preserved, either exactly or in an analogous way. We remark that for $x\in(0,1)$ the analogy is not always apparent as the map $T_x$ in that  case is not uniformly expanding.  The following proposition sums up some of the properties for $x\geq 1$.

\begin{proposition}
For $x\geq 1$, we have the following properties 
  \begin{itemize}
  \item The dynamical system $ \bigl(T_x,\;[0,1],\;\mathcal{B},\;\mu_x\bigr)$,  where $\mathcal{B}$ is the Borel $\sigma$–algebra on $[0,1]$ and $\mu_x$ is the invariant probability measure
  \[
    \mu_x(A)
    \;=\;
    \frac{1}{\ln \bigl(\tfrac{x+1}{x}\bigr)}
    \int_{A} \frac{1}{x + t}\,dt,
    \qquad A\in  \mathcal{B},
  \]
  is exact and hence ergodic.

  \item The entropy of this system is
  \[
    h\bigl(T_x\bigr)
    \;=\;
    \frac{
      \dfrac{\pi^2}{3} \;+\; 2\,Li_2(x+1)\;+\;\ln(x+1)\,\ln(x)
    }{
      \ln\bigl(\tfrac{x+1}{x}\bigr)
    },
  \]
  where $Li_2$ denotes the dilogarithm function, i.e, $Li_2(x)=\int_0^x \frac{\log(t)}{1-t} dt$.
  \item The natural extension is given by $([0,1)\times[0,1],\mathcal{B},\rho_x,\mathcal{T}_x)$ where $\mathcal{B}$ is the Borel $\sigma$-algebra on $[0,1)\times[0,1]$, $\rho_x$ the invariant measure with density $\frac{1}{\log(\frac{x+1}{x})}\frac{x}{(x+ty)^2}$  and $\mathcal{T}_x(t,y)=(T_x(t),\frac{x}{d_1(t)+y})$.
\end{itemize}
\end{proposition}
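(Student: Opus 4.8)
The plan is to handle the three bullets in turn, using the first to produce the invariant measure that drives the other two.

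\textit{Bullet 1 (invariance and exactness).} The inverse branches of $T_x$ are $\psi_n(t)=\frac{x}{x+n-1+t}$, $n\ge 1$, each a $C^2$ diffeomorphism of $[0,1)$ onto the digit interval $I_n:=\bigl(\frac{x}{x+n},\frac{x}{x+n-1}\bigr]$, so the transfer operator of $T_x$ with respect to Lebesgue measure is $(\mathcal L_x f)(t)=\sum_{n\ge 1}\frac{x}{(x+n-1+t)^2}f\!\bigl(\frac{x}{x+n-1+t}\bigr)$. Feeding in $f(t)=\frac{1}{x+t}$ and using $x+\frac{x}{x+n-1+t}=\frac{x(x+n+t)}{x+n-1+t}$, the $n$-th summand collapses to $\frac{1}{x+n-1+t}-\frac{1}{x+n+t}$, a telescoping series with sum $\frac{1}{x+t}$; hence $\mu_x$ is $T_x$-invariant, and it is a probability measure once divided by $\int_0^1\frac{dt}{x+t}=\log\frac{x+1}{x}$. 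Since every branch is onto $[0,1)$, the Adler bound $|T_x''|/|T_x'|^2=2t/x\le 2/x$ holds on $[0,1]$, and $|T_x'(t)|=x/t^2\ge x>1$ for $x>1$ (for $x=1$ the map is the classical Gauss map, or note that $T_1^2$ is uniformly expanding), the standard theory of piecewise-$C^2$ full-branch expanding interval maps gives a unique absolutely continuous invariant probability measure---necessarily $\mu_x$, as $\mu_x\sim\mathrm{Leb}$---and exactness, whence ergodicity.

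\textit{Bullet 2 (entropy).} Exactness makes the branch partition $\{I_n\}$ a one-sided generator (cylinders shrink by expansion), and $\log|T_x'(t)|=\log x-2\log t\in L^1(\mu_x)$ because the density is bounded on $[0,1]$ and $\int_0^1|\log t|\,dt<\infty$; so Rokhlin's formula applies and $h(T_x)=\int_0^1\log|T_x'(t)|\,d\mu_x$. This equals $\frac{1}{\log\frac{x+1}{x}}\bigl(\log x\log\frac{x+1}{x}-2\int_0^1\frac{\log t}{x+t}\,dt\bigr)$, and the remaining integral is a dilogarithm (substitute $t\mapsto t/x$ and integrate by parts). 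Applying the reflection and Landen functional equations for $\mathrm{Li}_2$, together with the value $\mathrm{Li}_2(1)=\pi^2/6$ (the source of the $\pi^2/3$), then rearranges this into the closed form in the statement.

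\textit{Bullet 3 (natural extension).} With $d_1(t)=x+a_1(t)-1$, the map $\mathcal T_x(t,y)=(T_x t,\frac{x}{d_1(t)+y})$ sends the strip $I_n\times[0,1]$ bijectively onto $[0,1)\times\overline{I_n}$; since $\bigcup_n I_n=(0,1]$ it is a bijection of $[0,1)\times[0,1]$ up to a null set, with branch inverses $\phi_n(t',y')=\bigl(\frac{x}{x+n-1+t'},\frac{x}{y'}-(x+n-1)\bigr)$. A short simplification---using $x+\frac{x}{x+n-1+t'}\bigl(\frac{x}{y'}-(x+n-1)\bigr)=\frac{x(x+t'y')}{y'(x+n-1+t')}$---shows the density $\frac{x}{(x+ty)^2}$ is carried to itself by each inverse branch together with its Jacobian, so $\rho_x$ is $\mathcal T_x$-invariant; its first marginal is $\frac{1}{x+t}$, and the same constant $\log\frac{x+1}{x}$ normalises it. Finally $\pi(t,y)=t$ intertwines $\mathcal T_x$ with $T_x$, and $\bigvee_{n\ge 0}\mathcal T_x^{\,n}\pi^{-1}\mathcal B$ is all of the Borel $\sigma$-algebra on $[0,1)\times[0,1]$ because $\mathcal T_x^{\,n}$ maps a rank-$n$ $t$-cylinder onto an interval times a $y$-interval whose endpoints are consecutive truncations of an $x$-continued fraction and hence of length $\to 0$; this is precisely the abstract characterisation of the natural extension.

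\textit{Main obstacle.} The invariance identities are mechanical telescopings and the exactness is off the shelf; the two genuinely delicate points are the dilogarithm bookkeeping in Bullet 2---one must track the branch and normalisation conventions carefully to land exactly on $\frac{\pi^2}{3}+2\mathrm{Li}_2(x+1)+\log(x+1)\log x$---and the generating property in Bullet 3, which rests on convergence of the $x$-continued fraction expansion established in \Cref{sec:cfresults} and is best cited there rather than re-derived.
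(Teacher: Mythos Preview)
Your proposal is correct as a proof sketch, but it is worth pointing out that the paper does not prove this proposition at all: immediately after stating it, the authors write that the properties ``follow from \cite{DKW13}'' and are not needed for the remainder of the paper. So the paper's ``proof'' is a bare citation to Dajani--Kraaikamp--van der Wekken, whereas you have supplied an independent argument.

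What you gain by your route is self-containment: the transfer-operator telescoping for invariance, the Adler--bounded-distortion route to exactness, Rokhlin's formula for the entropy, and a direct check of the natural-extension axioms are all standard moves that avoid sending the reader to another paper. What you lose is brevity, and you correctly flag the two places where the sketch is thin. The dilogarithm simplification in Bullet~2 really does require care with the paper's nonstandard definition $Li_2(x)=\int_0^x\frac{\log t}{1-t}\,dt$ (this differs from the usual $-\int_0^x\frac{\log(1-t)}{t}\,dt$ by a logarithmic term), so if you expand this into a full proof you should carry the computation out explicitly rather than gesture at ``reflection and Landen''. The $x=1$ case in Bullet~1 is also slightly glossed: $|T_1'|$ attains its infimum $1$ at $t=1$, and $T_1^2$ is not uniformly expanding either for the same reason, so your fallback should simply be the classical exactness of the Gauss map rather than the parenthetical about $T_1^2$.
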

These properties might or might not hold for $x\in(0,1)$ but are not needed for our proofs; they follow from \cite{DKW13}. Other properties like Khintchines constants, a Doeplin-Lenstra type theorem, and other Diophantine properties are probably within reach but out of the scope of this paper.\\

Referring back to \eqref{eq:F_x}, we find two inverses for each $t \in [0,1]$ which we denote as
\begin{equation}\label{eq: first Phi definition}
\Phi_0(t) = \frac{x}{x+t}, \qquad \Phi_1(t) = \frac{xt}{x+t}.
\end{equation}
For $x=1$, i.e. $F_x$ is the standard Farey map, the orbit of any rational $t\in (0,1)$ will be finite, eventually reaching $1/2$, the critical point of $F_1$ which is then mapped to one, and then the fixed point zero. Alternately, the tree generated by $\Phi_0$ and $\Phi_1$ rooted at $1/2$ will generate all rational numbers in $(0,1)$. So we clarify our notation to show how $\Phi_0$ and $\Phi_1$ may be used to generate a tree rooted at $x/(x+1)$, the critical point of $F_x$:
\[
\Phi_0\left(f(x)\right) = \frac{x}{x+f(x)}, \qquad \Phi_1\left(f(x)\right)=\frac{xf(x)}{x+f(x)}
\]
We now set the domain of $\Phi_0,\Phi_1$ to be \textit{arbitrary functions $f(x)$} observing that if we begin with a root of $x/(x+1)$ we generate a tree of rational functions. In this setting it is convenient to set $f(x) = p(x)/q(x)$. Depending on whether $f(0)=0$ or not, the functions $\Phi_0(f)$ and $\Phi_1(f)$ may have a removable discontinuity at $x=0$. We elect to remove this discontinuity when possible, which leads directly to \Cref{eqn: Phi} as our definition of the maps $\Phi_0$, $\Phi_1$.

The rational functions in the tree (\Cref{fig:FareyPoly}) generated by \Cref{eqn: Phi} may be directly represented with continued fraction notation. We introduced this notation in \Cref{eqn: vertex labels}, but we now justify the connection, that is, for a given $x$, if $t=[a_1,a_2,\ldots,a_k]_x$ then $t=\Phi_1^{a_1-1} \circ \Phi_0 \circ \Phi_1^{a_2-1} \circ \Phi_0 \circ \cdots \circ \Phi_0 \circ \Phi_1^{a_k-1} \left( \frac{1}{1}\right)$.
\begin{lemma}
    \label{lem: CF notation for vertices}
    For any $a \geq 1$, we have 
    \[ \Phi_1^{a-1} \left( \frac{1}{1}\right)=\frac{x}{x+a-1}.\]
    For a rational function $p(x)/q(x)$, we have
    \[ \Phi_1^{a-1} \circ \Phi_0 \left( \frac{p(x)}{q(x)}\right) = \frac{x}{x+(a-1)+\frac{p(x)}{q(x)}}.\]
    \begin{proof}
        Both claims will utilize the fact that for $a \geq 2$
        \[ \Phi_1^{a-1} \left( \frac{p(x)}{q(x)}\right) = \frac{x^{a-1}p(x)}{x^{a-2} \left(xq(x) + (a-1) p(x) \right)} = \frac{xp(x)}{xq(x) + (a-1)p(x)},\]
        where $p(0)=0$ may lead to further removable discontinuities at $x=0$, but which will not affect values at other $x$. Also, the above holds trivially for $a=1$ (again ignoring a removable discontinuity at $x=0$).

        The first claim is therefore proved, and for the second we apply the above to $\Phi_0$:
        \begin{align*}
        \Phi_1^{a-1} \circ \Phi_0 \left( \frac{p(x)}{q(x)}\right) &= \Phi_1^{a-1} \left( \frac{xq(x)}{xq(x)+p(x)}\right)\\
        \\
        &=\frac{x^2q(x)}{x(xq(x)+p(x))+(a-1)xq(x)}\\
        \\
        &=\frac{x}{x+(a-1) + \frac{p(x)}{q(x)}}.\qedhere
        \end{align*}
    \end{proof}
\end{lemma}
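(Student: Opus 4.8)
The plan is to first establish a closed form for the iterate $\Phi_1^{a-1}$ applied to an arbitrary rational function, and then to read off the two assertions as special cases. Away from $x=0$ the two branches in the definition \eqref{eqn: Phi} of $\Phi_1$ coincide and give $\Phi_1(p(x)/q(x)) = \frac{xp(x)}{xq(x)+p(x)}$, so I would begin by disposing of the removable discontinuities at the origin once and for all: every identity below is to be read as an identity of rational functions up to removable singularities at $x=0$, which is harmless since the lemma is only ever used to evaluate vertices at a fixed positive value of $x$.

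Next I would prove by induction on $a$ that
\[ \Phi_1^{a-1}\!\left(\frac{p(x)}{q(x)}\right) = \frac{x\,p(x)}{x\,q(x) + (a-1)\,p(x)} \qquad (a\ge 1). \]
The base case $a=1$ is immediate, and the inductive step is the one-line computation $\Phi_1\!\left(\frac{xp}{xq+(a-1)p}\right) = \frac{x\cdot xp}{x(xq+(a-1)p)+xp} = \frac{xp}{xq+ap}$. (Equivalently, $\Phi_1$ is the M\"obius transformation of the matrix $\left(\begin{smallmatrix} x & 0\\ 1 & x\end{smallmatrix}\right) = xI+N$ with $N^2=0$, so that $(xI+N)^{a-1} = x^{a-1}I + (a-1)x^{a-2}N$ and the same formula falls out after cancelling the factor $x^{a-2}$.) Specializing to $p=q=1$ then gives $\Phi_1^{a-1}(1/1) = \frac{x}{x+a-1}$, which is the first assertion.

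For the second assertion I would compute $\Phi_0(p/q) = \frac{x}{x+p/q} = \frac{xq}{xq+p}$ and then feed this rational function --- with numerator $xq$ and denominator $xq+p$ --- into the closed form just established, obtaining $\Phi_1^{a-1}\circ\Phi_0(p/q) = \frac{x\cdot xq}{x(xq+p)+(a-1)xq} = \frac{xq}{xq+(a-1)q+p} = \frac{x}{x+(a-1)+p/q}$ after dividing numerator and denominator by $q$. I do not expect any genuine obstacle here; the only step that calls for a moment's care is the bookkeeping of the $p(0)=0$ versus $p(0)\neq0$ case split in \eqref{eqn: Phi}, which is precisely why I would quarantine that concern in the opening paragraph.
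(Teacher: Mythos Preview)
Your proposal is correct and follows essentially the same route as the paper: both establish the closed form $\Phi_1^{a-1}(p/q)=\frac{xp}{xq+(a-1)p}$ (you by explicit induction, the paper by direct assertion with the same removable-singularity caveat), then specialize to $p=q=1$ for the first claim and precompose with $\Phi_0(p/q)=\frac{xq}{xq+p}$ for the second. Your parenthetical M\"obius-matrix remark that $\Phi_1$ corresponds to $xI+N$ with $N^2=0$ is a pleasant extra not in the paper, but otherwise the arguments are the same.
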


In the following section we prove the necessary properties of the continued fractions generated by $T_x$ in order to prove Theorem \ref{th:cfs}. 

\subsection{Continued Fraction Results}\label{sec:cfresults}
We will first prove convergence of the continued fractions for $x\in(0,1)$. For $x \geq 1$ the proof is analogous to the proof of convergence for proper continued fractions, see \cite{L40}. 
The $n$th convergent of $t\in(0,1)$ is given by
\[
c_n(t)=\frac{p_n}{q_n}=\bigl[0\,;\,a_1(t),a_2(t),\dots,a_n(t)\bigr]_x.
\]
Where $p_n$ and $q_n$ satisfy the relations
\begin{equation}\label{eq:rec for pn and qn}
    \begin{aligned}
        p_{-1}=1,&\, p_0=0, & p_n=(x+a_n-1)p_{n-1}+xp_{n-2},~\text{for } n\geq 1,\\
    q_{-1}=0, &\, q_0=1, & q_n=(x+a_n-1)q_{n-1}+xq_{n-2},~\text{for } n\geq 1.
    \end{aligned}
\end{equation}

\begin{proposition}\label{prop: convergence for 0<x<1}
    For $x\in(0,1)$ the continued fractions generated by the map $T_x$ converge as well.
\end{proposition}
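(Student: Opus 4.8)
The plan is to mimic the classical convergence proof for regular continued fractions, but with a twist needed when $x$ is small: there the denominators $q_n$ grow (they satisfy $q_n\geq q_{n-1}+q_{n-2}$), whereas for $x\in(0,1)$ the recursion \eqref{eq:rec for pn and qn} need not be expanding at all --- if all $a_n=1$ it reads $q_n=xq_{n-1}+xq_{n-2}$, which contracts once $x<\tfrac12$. The point is that the factor $x^n$ appearing in the difference of consecutive convergents decays geometrically and more than compensates. First I would dispose of the trivial case: if $T_x^{\,n}(t)=0$ for some $n$ the expansion is finite and there is nothing to prove, so assume $T_x^{\,n}(t)\in(0,1)$ for every $n$; then each $a_n\geq1$ is a genuine positive integer. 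From $q_{-1}=0$, $q_0=1$ and \eqref{eq:rec for pn and qn} an easy induction gives $q_n>0$ for all $n\geq0$, and since $x+a_n-1\geq x$ and $q_{n-2}\geq0$,
\[
q_n=(x+a_n-1)q_{n-1}+xq_{n-2}\;\geq\; xq_{n-1},\qquad n\geq1.
\]

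Next I would record the determinant relation. A direct computation from \eqref{eq:rec for pn and qn} shows $p_nq_{n-1}-p_{n-1}q_n=-x\bigl(p_{n-1}q_{n-2}-p_{n-2}q_{n-1}\bigr)$, and since $p_0q_{-1}-p_{-1}q_0=-1$ this gives $p_nq_{n-1}-p_{n-1}q_n=(-1)^{n+1}x^n$, hence
\[
c_n-c_{n-1}=\frac{(-1)^{n+1}x^n}{q_nq_{n-1}},\qquad n\geq1.
\]
The key estimate is then
\[
\frac{x^n}{q_nq_{n-1}}\;\leq\;(1+x)^{1-n},\qquad n\geq1,
\]
proved by induction: for $n=1$ it is $\frac{x}{x+a_1-1}\leq1$; for the inductive step write $\frac{x^n}{q_nq_{n-1}}=\frac{x^{n-1}}{q_{n-1}q_{n-2}}\cdot\frac{xq_{n-2}}{q_n}$, use $q_n\geq xq_{n-1}+xq_{n-2}$ to bound $\frac{xq_{n-2}}{q_n}\leq\frac{q_{n-2}}{q_{n-1}+q_{n-2}}$, and then the inequality $q_{n-1}\geq xq_{n-2}$ above to bound this last quantity by $\frac{1}{1+x}$.

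With this in hand the convergents form a Cauchy sequence: for $m<n$,
\[
|c_n-c_m|\;\leq\;\sum_{k=m+1}^{n}\frac{x^k}{q_kq_{k-1}}\;\leq\;\sum_{k=m+1}^{\infty}(1+x)^{1-k}\;=\;\frac{(1+x)^{1-m}}{x}\;\longrightarrow\;0,
\]
so $(c_n)$ converges. To identify the limit with $t$, iterate $t=x/\bigl(x+(a_1-1)+T_x(t)\bigr)$ to obtain $t=\dfrac{p_n+s_np_{n-1}}{q_n+s_nq_{n-1}}$ with $s_n:=T_x^{\,n}(t)\in[0,1)$; subtracting $c_n$ and using the determinant identity yields $t-c_n=\dfrac{(-1)^n s_n x^n}{q_n(q_n+s_nq_{n-1})}$, whose modulus is at most $\dfrac{x^n}{q_n\cdot xq_{n-1}}\leq\dfrac{1}{x}(1+x)^{1-n}\to0$. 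Hence $c_n\to t$.

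The only step requiring any thought is the key estimate on $x^n/(q_nq_{n-1})$: since one cannot lower-bound $q_n$ the way one does for $x\geq1$, the idea is instead to control the \emph{ratio} $xq_{n-2}/q_n$ of successive terms, for which the two elementary facts $x+a_n-1\geq x$ and $q_{n-1}\geq xq_{n-2}$ are exactly what is needed; everything else is bookkeeping.
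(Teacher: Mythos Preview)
Your proof is correct and cleaner than the paper's. Both arguments start from the same place---the determinant identity and the resulting bound on $|t-p_n/q_n|$---but diverge in how they force the error to zero. The paper restricts to the worst case $a_n\equiv1$, proves by a two-case induction (even/odd $n$) the explicit lower bound $q_n>\tfrac{n}{2}x^{(n+2)/2}+x^{n/2}$ (resp.\ $\tfrac{n+1}{2}x^{(n+1)/2}$), and deduces $x^n/q_n^2=O(1/n^2)$. You instead never lower-bound $q_n$ at all: from the two elementary facts $q_n\geq xq_{n-1}+xq_{n-2}$ and $q_{n-1}\geq xq_{n-2}$ you extract the contraction $\tfrac{xq_{n-2}}{q_n}\leq\tfrac{1}{1+x}$, giving the uniform geometric estimate $\tfrac{x^n}{q_nq_{n-1}}\leq(1+x)^{1-n}$. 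This buys you a single argument valid for every $x>0$ (not just $x<1$), no even/odd split, and a better---geometric rather than polynomial---rate of convergence. The paper's bound, on the other hand, yields explicit information about the size of $q_n$ itself, which your ratio argument does not.
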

\begin{proof}
    Just as for $x \geq 1$, we have the following inequality:
\begin{equation}
    \left| t-\frac{p_n}{q_n} \right|<\frac{x^n}{q_n^2}.
\end{equation}
For $x\in(0,1)$ the $q_n$ are not necessarily increasing but we will show that $\lim_{n\to \infty}\frac{x^n}{q_n^2}=0 $. Note that the least value of $q_n$ achieved by taking $a_n=1$ for all $n$ so it suffices to prove convergence for that case. 
In order to do so, we show that for $n\geq3$
\begin{align}
    q_n&>\frac{n}{2}x^{\frac{n+2}{2}}+x^{\frac{n}{2}}& n \text{ even,}\label{induction:even}\\
    q_n&>\frac{n+1}{2}x^{\frac{n+1}{2}}& n \text{ odd.}\label{induction:odd}
\end{align}
This is done by induction. Note that for $n=3$ we have $q_3=x^3+2x^2>2x^2$ and $q_4=x^4+3x^3+x^2>2x^3+x^2$. So suppose $n$ is even. Then under our inductive hypotheses we find
\[
q_{n+1}=x(q_n+q_{n-1})>x\left(\frac{n}{2}x^{\frac{n+2}{2}}+x^{\frac{n}{2}}+\frac{n}{2}x^{\frac{n}{2}}\right)=\frac{n}{2}x^{\frac{n+4}{2}}+\frac{n+2}{2}x^{\frac{n+2}{2}}>\frac{n+2}{2}x^{\frac{n+2}{2}}
\]
and for $n$ odd:
\[
q_{n+1}=x(q_n+q_{n-1})>x\left(\frac{n+1}{2}x^{\frac{n+1}{2}} +\frac{n-1}{2}x^{\frac{n+1}{2}}+x^{\frac{n-1}{2}}  \right)=nx^{\frac{n+3}{2}}+x^{\frac{n+1}{2}}.
\]
Using \eqref{induction:even} for $n$ even we find
\[
\frac{x^n}{q_n^2}<\frac{x^n}{(\frac{n}{2}x^{\frac{n+2}{2}}+x^{\frac{n}{2}})^2}=\frac{x^n}{\frac{n^2}{4}x^{n+2}+nx^{n+1}+x^n}=\frac{1}{\frac{n^2}{4}x^{2}+nx^{1}+1}.
\]
Using \eqref{induction:odd} for $n$ odd we find
\[
\frac{x^n}{q_n^2}<\frac{x^n}{(\frac{n+1}{2}x^{\frac{n+1}{2}})^2}=\frac{1}{\frac{(n+1)^2}{4}x}.
\]
Of course between both cases we get that $|t-p_n/q_n|$ converges to zero as $n \rightarrow \infty$.
\end{proof}
It is of interest to characterize for which $x$ all rationals in $(0,1)$ have a finite expansion. 

\begin{proposition}\label{prop:finite}
 For $x\in\mathbb{Q}_{>0}$,   all rationals in $(0,1)$ have a finite $x$-expansion if and only if $x\in\mathbb{N}$.  
\end{proposition}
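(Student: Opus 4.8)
The plan is to prove the two implications separately; essentially all of the content is in the ``only if'' direction.

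For the ``if'' direction, suppose $x=N\in\NN$ and take $t=p/q\in(0,1)$ in lowest terms. A one–line computation gives $\frac{x}{t}-x=\frac{N(q-p)}{p}$, so $T_N(t)=s/p$ where $s\in\{0,1,\dots,p-1\}$ is the remainder of $N(q-p)$ modulo $p$. Hence, writing every iterate $T_N^{\,n}(t)$ in lowest terms, each one is either $0$ or a rational in $(0,1)$ whose denominator is at most $p<q$, i.e.\ strictly smaller than the denominator of the previous iterate. Since a strictly decreasing sequence of positive integers is finite, some $T_N^{\,n}(t)$ equals $0$, so the $N$-expansion of $t$ is finite. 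This uses nothing about $N\ge 1$ beyond $N\in\NN$, and the converse below will not assume $x\ge 1$, so the two together also settle the $x\in(0,1)$ part of the statement.

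For the ``only if'' direction I will argue the contrapositive: if $x=a/b$ with $\gcd(a,b)=1$ and $b\ge 2$, I will exhibit a rational in $(0,1)$ with infinite $x$-expansion. Fix a prime $\ell\mid b$ and set
\[
S_\ell=\bigl\{\, p/q\in(0,1) : \gcd(p,q)=1,\ \ell\mid q \,\bigr\}.
\]
The key claim is $T_x(S_\ell)\subseteq S_\ell$. To see it, take $p/q\in S_\ell$; then $\ell\mid q$ together with $\gcd(p,q)=1$ forces $\ell\nmid p$ and $\ell\nmid q-p$, while $\ell\mid b$ forces $\ell\nmid a$. Therefore in $\frac{x}{t}-x=\frac{a(q-p)}{bp}$ the factor $\ell$ (which divides $b$, hence the denominator) does not cancel against the numerator $a(q-p)$, so $\ell$ divides the reduced denominator of this fraction. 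Subtracting the integer $\floor{a(q-p)/(bp)}$ to obtain $T_x(p/q)$ leaves the reduced denominator unchanged, so $T_x(p/q)$ is again a fraction whose reduced denominator is divisible by $\ell$; in particular $T_x(p/q)\neq 0$, and since $T_x$ takes values in $[0,1)$ we conclude $T_x(p/q)\in S_\ell$.

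Granting the claim, take $t_0=1/\ell\in S_\ell$ (valid since $\ell\ge 2$). By induction $T_x^{\,n}(t_0)\in S_\ell$ for every $n$, so $T_x^{\,n}(t_0)\neq 0$ for every $n$, i.e.\ $t_0$ has an infinite $x$-expansion, which contradicts all rationals in $(0,1)$ being finite. One can check the orbit is in fact eventually periodic — for instance $1/3\mapsto 2/3\mapsto 1/6\mapsto 2/3\mapsto\cdots$ when $x=1/3$ — but this is not needed. I expect the only real obstacle to be locating this invariant: because denominators genuinely grow under $T_x$ as soon as $x\notin\NN$, a finite expansion is not excluded by any size or monotonicity consideration, so one must find an arithmetic quantity that obstructs reaching $0$; isolating $S_\ell$ — equivalently, the observation that a prime occurring in the denominator of $x$ can never be removed from the denominator of an iterate — is the crux, after which only the elementary valuation bookkeeping above remains.
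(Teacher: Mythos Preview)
Your proof is correct and follows essentially the same approach as the paper: for $x\in\NN$ you track a strictly decreasing integer invariant under $T_x$ (you phrase it via denominators, the paper via numerators, but since the new denominator equals the old numerator these are the same), and for $x=a/b\notin\NN$ you show that the set of rationals whose reduced denominator is divisible by a fixed prime $\ell\mid b$ is $T_x$-invariant and misses $0$. The paper runs the latter argument with the full denominator $b$ in place of a prime divisor, but the mechanism is identical.
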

\begin{proof}
Let $\frac{t_0}{s_0}\in(0,1)$ coprime and set $T_x^n(\frac{t_0}{s_0})=\frac{t_n}{s_n}$ (so long as defined).\\ For $x\in \mathbb{N}$ we have the following.
\[
T_x\left(\frac{t_n}{s_n}\right)=\frac{s_n x-at_n}{t_n}=\frac{t_{n+1}}{s_{n+1}}.
\]

Since $\frac{t_{n+1}}{s_{n+1}}\in (0,1)$ we find $t_{n+1}<s_{n+1}=t_n$ even without dividing out common divisors. Therefore the numerators decrease when applying $T_x$ and will eventually be zero.\\
For $x\in\mathbb{Q}\backslash \mathbb{N}$ we write $x=p/q$ with $p,q$ coprime and $\hat{p}\equiv p\mod q$ with $\hat{p}<q$.
\[
T_x\left(\frac{t_n}{s_n}\right)=\frac{s_n p-at_n q -\hat{p}t_n}{t_nq}=\frac{t_{n+1}}{s_{n+1}}.
\]
Now for $s_n\equiv 0 \mod q$ we find that $t_{n+1} \mod q \equiv - \hat{p}t_n \mod q \not\equiv 0 \mod q $ and $s_{n+1}\equiv 0 \mod q$. In other words, if we start with $s_0$ divisible by $q$ and $t_0$ not divisible by $q$, the numerators of any point in the orbit will never be $0 \mod q$ and therefore never can be $0$. (On the other hand the denominators will always be divisible by $q$.)
\end{proof}

If $x \notin \mathbb{Q}$, then for any rational $t_0/s_0$, we have
\[ T_x\left( \frac{t_0}{s_0}\right) = \left(\frac{x}{t_0/s_0} -x\right)- \floor{\frac{x}{t_0/s_0}-x}= \frac{(s_0-t_0) x}{t_0}-a\]
which is certainly irrational. On the other hand, pre-images of rational numbers can be irrational too. Therefore, to give a general statement about whether all rationals have a finite expansion for generic values of irrational $x$ seems rather non-trivial. Note that it can happen for specific irrational values of $x$ that there are finite expansions for rational numbers. For example for $x=\sqrt{2}$ we find 
\[[2,3]_x = \cfrac{\sqrt{2}}{\sqrt{2}+1+\cfrac{\sqrt{2}}{\sqrt{2}+2}}=\frac{1}{2}\]
and similarly for $x=1+\frac{1}{2}\sqrt{14}$ we find $\frac{1}{3}=[6,2]_x$. This classification problem seems to be similar in nature as for quadratic irrationals and rationals for certain $N$-continued fraction algorithms, see~\cite{KL}. If a number has a finite expansion, one could argue it is `perfectly approximable.' For algebraic values of $x$, it could be interesting to investigate which numbers are perfectly approximable, which ones are well approximable (having very high digits) and which ones are badly approximable (having bounded digits). These questions are beyond the scope of this article.

\begin{proof}[Proof of Theorem \ref{th:cfs}]
The fact that we find a subset of $[0,1]$ for all $x\in(0,\infty)$ immediately follows from the relation with continued fractions. That the set is dense follows from the convergence of those continued fractions. Finally, for $x\in\mathbb{Q}_{\geq 0}$ the appearance of every rational exactly once in the tree if and only if $x\in \mathbb{Z}^+$ follows from Proposition \ref{prop:finite}. 
\end{proof}

\subsection{Density Result}\label{sec:dense}
In this section we present collected results regarding our tree that are not directly related to traditional continued fractions. We will generally write $p(x)/q(x)=p/q$ to simplify notation, but recall throughout that $p,q$ are polynomials and not integers. 

\begin{proposition}\label{prop: elementary vertex properties without proof}
    At every  vertex $[a_1,\ldots,a_k]_x=p/q$, we have:
    \begin{itemize}
        \item $p,q$ are polynomials of the same degree.
        \item All coefficients of each are positive integers, except the constant term of $p(x)$ which may be zero.
        \item Term-by-term, the coefficients of $q$ are strictly larger than those of $p$, except for the leading coefficients which are both one.
    \end{itemize}
    \begin{proof}
        All properties are preserved by both $\Phi_0$ and $\Phi_1$, and are true for the root vertex $p(x)=x$, $q(x)=x+1$.
    \end{proof}
\end{proposition}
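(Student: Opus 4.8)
The plan is to prove the three assertions simultaneously by induction on the depth of the vertex in the tree, i.e.\ on the number of compositions of $\Phi_0$ and $\Phi_1$ needed to reach it from the root. It is cleanest to package the three bullets into a single invariant: writing a vertex as $p(x)/q(x)$ with $p(x)=\sum_{i=0}^{d}p_i x^i$ and $q(x)=\sum_{i=0}^{d}q_i x^i$, I claim $\deg p=\deg q=d$; $p_d=q_d=1$; $q_i\ge 1$ for all $0\le i\le d$ and $p_i\ge 1$ for all $1\le i\le d$ (so only $p_0$ may vanish), all of them integers; and $q_i>p_i$ for $0\le i\le d-1$. For the root $[2]_x=x/(x+1)$ one has $p(x)=x$, $q(x)=x+1$, $d=1$, $p_1=q_1=1$, $p_0=0<1=q_0$, so the invariant holds; this is the base case.

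For the inductive step, suppose $p/q$ sits at some vertex and satisfies the invariant, and let $(P,Q)$ denote the numerator/denominator pair produced by $\Phi_0(p/q)$ or by $\Phi_1(p/q)$ according to \eqref{eqn: Phi}. The key observation is that in each of the four resulting cases one has the following: $P$ and $Q$ are both monic of the same degree $D$; $P$ has nonnegative integer coefficients, with its constant term possibly zero; and $Q-P$ is a polynomial of degree at most $D-1$ every one of whose coefficients of degrees $0,\dots,D-1$ is a positive integer. These facts recover the three bullets at the new vertex, because $Q_j=P_j+(Q_j-P_j)\ge 0+1=1$ for $j<D$ forces all coefficients of $Q$ to be positive integers, while $Q_D=P_D=1$ gives the matching leading coefficients. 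To establish the observation I would simply record $Q-P$ in each case. If $p(0)\ne 0$ then $D=d+1$, and $\Phi_0$ gives $(P,Q)=(xq,\,xq+p)$ with $Q-P=p$, while $\Phi_1$ gives $(P,Q)=(xp,\,xq+p)$ with $Q-P=x(q-p)+p$. If $p(0)=0$, write $p=x\tilde p$ with $\deg\tilde p=d-1$; then $D=d$, and $\Phi_0$ gives $(P,Q)=(q,\,q+\tilde p)$ with $Q-P=\tilde p$, while $\Phi_1$ gives $(P,Q)=(p,\,q+\tilde p)$ with $Q-P=q+\tilde p-p$. In every case the degrees and monicity of $P$ and $Q$ are immediate from $\deg p=\deg q$, the vanishing of the degree-$D$ coefficient of $Q-P$ follows from both $P$ and $Q$ being monic of degree $D$, and the positivity and integrality of the remaining coefficients of $Q-P$ follow from the inductive inequalities $q_i>p_i$ ($i<d$), $p_i\ge 1$ ($i\ge 1$), and $p_0\ge 1$ in the $p(0)\ne 0$ cases. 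This closes the induction.

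The argument is routine; its only real content is the bookkeeping forced by the two clauses of \eqref{eqn: Phi}. One must keep track of the fact that the common degree increases by exactly one when $p(0)\ne 0$ but is unchanged when $p(0)=0$ — so that removing the discontinuity in the second clause genuinely preserves the invariant rather than silently deleting a coefficient — and one must check the freshly created strict inequality at the sub-leading coefficient, which in each case amounts to the positivity of the degree-$(D-1)$ coefficient of $Q-P$ and follows from $q_{d-1}>p_{d-1}$. I do not expect any obstacle beyond carrying out this four-way case check carefully.
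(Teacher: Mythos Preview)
Your proposal is correct and follows exactly the same approach as the paper's one-line proof: verify the base case at the root and check that each of $\Phi_0,\Phi_1$ preserves the invariant. The paper simply asserts this preservation without computation, whereas you actually carry out the four-way case split and record $Q-P$ in each case; the bookkeeping you outline is accurate (minor remark: in the $\Phi_0$ cases the sub-leading coefficient of $Q-P$ is positive because $p_d=1$, not because of $q_{d-1}>p_{d-1}$, but this is cosmetic).
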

\begin{proposition}\label{prop: increasing functions}
    For every vertex $v=[a_1,\ldots,a_k]_x$, we have both
    \begin{align}
       v'(x)&> 0 &\textrm{(wherever defined)}\\
        xv'(x) &\leq v(x) & \textrm{(with equality if and only if $x=0$ and $v(0)=0$)}
    \end{align}
    \begin{proof}
    We induct on $k$, the length of the expansion of $v$. For $k=1$ we have 
    \[v=\frac{x}{x+a_1-1}\]
    (where $a_1 \geq 2$) for which both claims may be trivially verified. Now assume both properties hold for $v$, and for some $a \in \mathbb{Z}^+$ set 
    \[V = [a,v]_x = \begin{cases}\frac{x}{x+a-1+v} & a \geq 2 \textrm{ or } v(0) \neq 0\\ \frac{1}{1+v/x} & a=1 \textrm{ and } v(0)=0.       
    \end{cases}\]
    Note that by \Cref{prop: elementary vertex properties without proof} $x=0$ is at most a simple root of $v$, so the second case may be properly defined even for $x=0$, and if we establish $v'>0$, then $xv'$ also has at most a simple root at $x=0$.
    In the first case, we compute
    \begin{align*}
    V'&=\frac{a-1+v-xv'}{(x+a-1+v)^2}.\\
    \intertext{Since we have assumed $xv'<v$, the first claim now follows.}
    xV'-V&= \frac{x((a-1)+v-xv')-x(x+a-1+v)}{(x+a-1+v)^2}\\
    &=-x^2\frac{v'+1}{(x+a-1+v)^2}.\\
    \end{align*}
    Which is negative unless $x=0$, in which case it follows that $V(0)=0$ as well.

    In the second case the same proof applies for all $x \neq 0$; only at $x=0$ is there any ambiguity. Then we may set $v=(x^n+\cdots+a_1x)/(x^n+\cdots+b_1x+b_0)$ where $a_1,b_0$ are positive integers, and simply from the definition of the derivative compute that $v'(0)=a_1/b_0 >0$ and trivially $xv'=v$ as both are zero.
    \end{proof}
\end{proposition}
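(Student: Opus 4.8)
The plan is to prove both inequalities by induction on the length $k$ of the expansion, using the closed form of \lref{lem: CF notation for vertices}. Writing a length-$k$ vertex as $V=[a,a_2,\dots,a_k]_x$ and letting $v=[a_2,\dots,a_k]_x$ be the length-$(k-1)$ vertex obtained by deleting the first digit, that lemma gives
\[
V \;=\; \Phi_1^{a-1}\circ\Phi_0(v)\;=\;\frac{x}{x+(a-1)+v},
\]
an identity that is valid, and presents $V$ as a rational function regular at $x=0$, in every case \emph{except} $a=1$ together with $v(0)=0$; in that one ``indeterminate'' case the right-hand side is $0/0$ at $x=0$ and one must instead use $V=1/(1+v/x)$ coming from \Cref{eqn: Phi}. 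The base case $k=1$ is $v=x/(x+a_1-1)$ with $a_1\ge 2$, for which $v'=(a_1-1)/(x+a_1-1)^2>0$ and $xv'-v=-x^2/(x+a_1-1)^2\le 0$, the latter with equality exactly at $x=0$, where indeed $v(0)=0$.

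For the inductive step, assume $v'>0$ and $xv'\le v$ with equality in the latter iff $x=0$ and $v(0)=0$. Outside the indeterminate case the quotient rule gives
\[
V' \;=\; \frac{(a-1)+v-xv'}{\bigl(x+(a-1)+v\bigr)^2},
\qquad
xV'-V \;=\; \frac{-x^2\,(v'+1)}{\bigl(x+(a-1)+v\bigr)^2}.
\]
The numerator of $V'$ is $(a-1)+(v-xv')$, a sum of two nonnegative quantities (using $xv'\le v$) that cannot both vanish: if $a\ge 2$ the first is at least $1$; if $a=1$ then we are not in the indeterminate case, so $v(0)\neq 0$, and then the equality clause forces $v-xv'>0$ identically. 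Hence $V'>0$. Since $v'>0$ gives $v'+1>0$, the second display is $\le 0$ and vanishes only at $x=0$, where $V(0)=0$ because $(a-1)+v(0)>0$; so the equality characterization is preserved. (If one prefers to work directly with single applications of $\Phi_0$ and $\Phi_1$, the analogous computation for $\Phi_1$ uses $V=xv/(x+v)$, $V'=(v^2+x^2v')/(x+v)^2$, $xV'-V=x^2(xv'-v)/(x+v)^2$ and goes through the same way.)

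The one genuinely delicate point — and the main obstacle — is the value $x=0$ in the indeterminate case $V=[1,a_2,\dots,a_k]_x=\Phi_0(v)$ with $v(0)=0$. By \Cref{prop: elementary vertex properties without proof} the numerator of $v$ has positive linear coefficient, so $v$ has a simple zero at $0$ and we may write $v=xu$ for a ratio of polynomials $u$ with $u(0)=v'(0)>0$; thus $V=1/(1+u)$, so $V(0)=1/(1+u(0))\in(0,1)$. Since $V(0)\neq 0$ we get $xV'(0)-V(0)=-V(0)<0$, a strict inequality consistent with the equality clause (no equality is asserted for this vertex), and for $x\neq 0$ the formulas above give the inequalities as before. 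What remains is $V'(0)>0$, and from $V=1/(1+u)$ one has $V'(0)=-u'(0)/(1+u(0))^2$, so it suffices to know $u'(0)=(v/x)'(0)<0$. I would therefore carry a third assertion through the same induction: \emph{whenever a vertex $w$ has $w(0)=0$, one has $(w/x)'(0)<0$}. This propagates essentially for free, since a length-$k$ vertex $W$ with $W(0)=0$ is never the indeterminate one (that one has $W(0)\neq 0$), hence $W=x/(x+(b-1)+w)$ with $(b-1)+w(0)>0$, so $W/x=1/(x+(b-1)+w)$ and $(W/x)'(0)=-(1+w'(0))/\bigl((b-1)+w(0)\bigr)^2<0$ using $w'(0)>0$; the base case is $(x/(x+1))/x=1/(x+1)$, whose derivative at $0$ is $-1$. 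Feeding this auxiliary fact back into the indeterminate step closes the induction.
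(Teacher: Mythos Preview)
Your proof is correct and follows essentially the same inductive approach as the paper's: the same base case, the same closed form $V=x/(x+(a-1)+v)$ from \lref{lem: CF notation for vertices}, and the same quotient-rule computations for $V'$ and $xV'-V$. You are in fact more careful than the paper at the one delicate point---establishing $V'(0)>0$ in the indeterminate case $a=1$, $v(0)=0$---where you carry the auxiliary inequality $(w/x)'(0)<0$ (for vertices with $w(0)=0$) through the induction, whereas the paper's handling of this value is rather elliptical.
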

\begin{proposition}\label{prop: root ordering}
Let $v=[a_1,\ldots,a_k]_x=p/q$ be any vertex, and let $p,q$ be of degree $n$. Then we may write
\[v(x) = \prod_{i=1}^n \left(\frac{x-r_i}{x-s_i}\right),\]
where $s_n \leq -1$, and all $r_i$, $s_i$ may be ordered as
\[s_n < r_n < \cdots <s_1 <r_1 \leq 0.\]
That is: $p,q$ are always fully-factorable over $\mathbb{R}$ with all roots of multiplicity one, share no roots, have no positive roots, between two roots of one there is always a root of the other, and the largest root of the two is that of $p$.
    \begin{proof}
    We induct on $k$. For $k=1$ we have $v=x/(x+a_1-1)$ with $a_1 \geq 2$ and all properties follow immediately. So suppose $v$ satisfies all claimed properties, we write
    \[v = \prod_{i=1}^n \left( \frac{x-r_i}{x-s_i}\right),\]
    and set
    \[V = [a,v]_x = \frac{x}{x+(a-1)+v} = \begin{cases}
        \frac{xq}{(x+a-1)q+p} & (a \geq 2 \textrm{ or } p(0)\neq 0)\\ \frac{q}{q+p/x} & (a=1 \textrm{ and }p(0)=0)
    \end{cases}\]
    In the first case, note that the roots of $xq(x)$ (the numerator of $V$) will be exactly $s_n,s_{n-1},\ldots,s_1,0$, while in the second case the roots of the numerator of $V$ will be the same, but without $0$.

    In both cases, roots of the denominator of $V$ are given by the solutions to $v(x) = -(x+a-1)$, i.e. 
    \[x + v(x) = -(a-1).\]
    The function $x+v(x)$ is strictly increasing (\Cref{prop: increasing functions}), so there is a unique solution in each interval $(s_i,s_{i-1})$. As $x \rightarrow -\infty$, we have $v(x)$ decreasing to one, so there is exactly one other solution on the interval $(-\infty,s_n)$. In the second case the proof is now complete.

    In the first case as $p(0)>-(a-1)$ (as either $a \geq 2$ or $p(0)>0$) we find one more solution in the interval $(s_1,0)$, completing the proof.
    \end{proof}
\end{proposition}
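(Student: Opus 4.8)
The plan is to induct on $k$, the length of the continued-fraction label of the vertex. The base case $k=1$ is immediate, since $v=x/(x+a_1-1)$ with $a_1\ge 2$ has $r_1=0$ and $s_1=1-a_1\le -1$ and there is nothing to interlace. For the inductive step, note that every vertex of length $k+1$ is of the form $V=[a,v]_x$ with $v=p/q$ a vertex of length $k$ and $a\in\mathbb{Z}^{+}$, and by \Cref{lem: CF notation for vertices} we have $V=x/(x+(a-1)+v)$. Clearing denominators produces exactly the two cases of \eqref{eqn: Phi}: if $a\ge 2$ or $p(0)\ne 0$ then $V=xq/\bigl((x+a-1)q+p\bigr)$, while if $a=1$ and $p(0)=0$ then, since $p(0)=0$ forces $r_1=0$ (as $r_1$ is the largest root and $r_1\le 0$), we may write $p=x\tilde p$ with $\tilde p(0)\ne 0$ and $V=q/(q+\tilde p)$.

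First I would read off the numerator of $V$. In the first case it is $xq$, of degree $n+1$, with roots $0>s_1>\cdots>s_n$, all simple because $0\notin\{s_i\}$, and the new largest root is $0$. In the second case the numerator is $q$ itself, of degree $n$, with roots $s_1>\cdots>s_n$, and the degree does not increase. In either case the properties ``no positive roots'' and ``the largest root belongs to the numerator'' will be automatic once the denominator roots are located.

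The main work is the denominator, whose zeros are the solutions of $g(x):=x+v(x)=-(a-1)$ in the first case, and of $g(x)=0$ (with $x\ne 0$) in the second. By \Cref{prop: increasing functions} the function $g$ is strictly increasing wherever defined, and its only singularities are the simple poles $s_1>\cdots>s_n$ of $v$; a strictly increasing function with a simple pole blows up to $+\infty$ just left of the pole and comes up from $-\infty$ just right of it, and $g(x)\to\pm\infty$ as $x\to\pm\infty$ because $v(x)\to 1$. Hence on each of the $n+1$ intervals $(-\infty,s_n),(s_n,s_{n-1}),\dots,(s_2,s_1),(s_1,\infty)$ the relevant equation has exactly one solution, all simple, and none equal to any $s_i$, so nothing cancels and the denominator factors completely over $\mathbb{R}$. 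It then remains to pin down the two extreme roots. On $(-\infty,s_n)$ every factor $(x-r_i)/(x-s_i)$ exceeds $1$ (since $x<s_i<r_i$), so $v>1$ there; the root $\sigma$ in that interval satisfies $\sigma=-(a-1)-v(\sigma)<-a\le -1$, which gives the required bound for the new most-negative denominator root. On the rightmost interval, $g(s_1^{+})=-\infty$ while $g(0)=v(0)=p(0)/q(0)\ge 0>-(a-1)$ when $a\ge 2$, and $g(0)=v(0)>0$ when $a=1$ (where $p(0)\ne 0$); monotonicity then places that root in $(s_1,0)$ and leaves no root in $[0,\infty)$. In the second case $g(0)=0$ exactly, so the corresponding ``root'' is $x=0$, which is precisely what the degree drop from $xq+p$ to $q+\tilde p$ removes. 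Assembling the positions of the numerator and denominator roots yields the interlacing chain claimed for $V$.

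I expect the main obstacle to be the bookkeeping around $x=0$: matching the root count of $g$ against the degree drop in the case $a=1$, $p(0)=0$, and confirming that the rightmost denominator root stays inside $(s_1,0)$ even when $a=1$. Everything else — the pole behaviour forced by monotonicity of $v$, the bound $v>1$ to the left of $s_n$, and tracking which open interval contains which root — is routine once those boundary cases are settled.
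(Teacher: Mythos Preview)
Your proposal is correct and follows essentially the same route as the paper: induct on $k$, write $V=x/(x+(a-1)+v)$, read off the numerator roots as $\{s_i\}$ (plus $0$ in the degree-raising case), and locate the denominator roots as the solutions of $x+v(x)=-(a-1)$ using the strict monotonicity of $x+v$ from \Cref{prop: increasing functions} together with its pole/asymptotic behaviour. Your write-up is in fact a bit more explicit than the paper's on two points---the verification that the new leftmost pole satisfies $\sigma<-a\le -1$ and the bookkeeping at $x=0$ in the $a=1,\ p(0)=0$ case---both of which the paper leaves somewhat implicit.
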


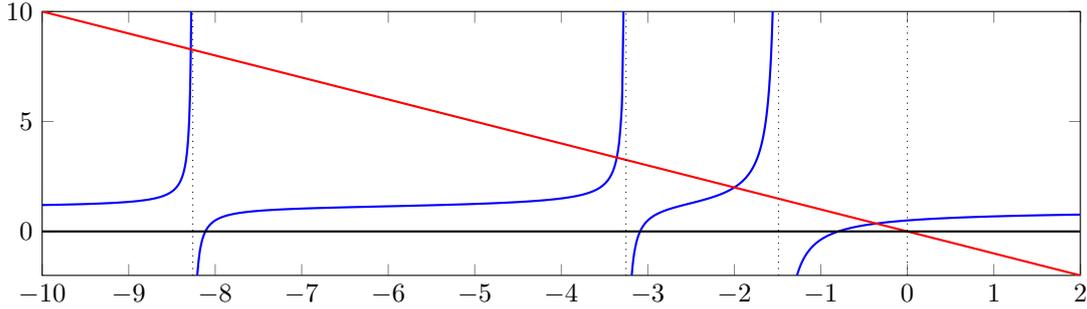
\begin{figure}[h]
\centering
\begin{tikzpicture}[domain=-10:2]
\begin{axis}[width=.9 \linewidth, height= .3 \linewidth,
    xmin = -10, xmax = 2,
    ymin = -2, ymax = 10]
    \addplot[
        domain = -10:-8.27,
        samples = 200,
        smooth,
        thick,
        blue,
    ] {(x^3+12*x^2+34*x+20)/(x^3+13*x^2+44*x+40)};
        \addplot[
        domain = -8.24:-3.27,
        samples = 200,
        smooth,
        thick,
        blue,
    ] {(x^3+12*x^2+34*x+20)/(x^3+13*x^2+44*x+40)};
        \addplot[
        domain = -3.25:-1.5,
        samples = 200,
        smooth,
        thick,
        blue,
    ] {(x^3+12*x^2+34*x+20)/(x^3+13*x^2+44*x+40)};
        \addplot[
        domain = -1.47:2,
        samples = 200,
        smooth,
        thick,
        blue,
    ] {(x^3+12*x^2+34*x+20)/(x^3+13*x^2+44*x+40)};
    \draw[red, thick](-10,10)--(2,-2);
    \draw[black,dotted](-8.259,-2)--(-8.259,10);
    \draw[black,dotted](-3.252,-2)--(-3.252,10);
    \draw[black,dotted](-1.489,-2)--(-1.489,10);
    \draw[black,dotted](0,-2)--(0,10);
    \draw[black,thick](-10,0)--(2,0);
\end{axis}
\end{tikzpicture}
\caption{For the vertex $[1,2,6,4]_x$, given by the composition $\Phi_0 \Phi_1 \Phi_0 \Phi_1^5 \Phi_0 \Phi_1^3(\frac{1}{1})$, yielding the function $(x^3+12x^2+34x+20)/(x^3+13x^2+44x+40)$. The function is graphed, along with $y=-x$; intersections will be roots of the denominator of either child vertex. As $p(0)\neq 0$, either child vertex will also have $x=0$ as a root. For either $\Phi_0$ or $\Phi_1$, the `alternating sequence' of roots of numerator and denominator is preserved at the child vertex.}
\label{fig: rational plot}
\end{figure}

We use \Cref{prop: root ordering} to establish four separate steps which will complete the proof of \Cref{th:dense}. We will need to understand how the roots and poles of $p/q$ align in contrast to those of both $\Phi_0(p/q)$ and $\Phi_1(p/q)$, both of which then depend on $p(0)=0$ versus $p(0) \neq 0$. See \Cref{fig: all the subfigures} for a collection of figures illustrating these cases.
\begin{figure}[bth]
    \begin{center}
    \subfigure[For $p/q$ with $p(0)= 0$, considering $\Phi_1$, set $\hat{p}=p$ and $\hat{q}=q+p/x$.]{
\begin{tikzpicture}[scale=.6]
    \draw[<-] (-1,0)--(5,0);
    \draw[style=dotted] (5,0)--(8,0);
    \draw (8,0)--(14,0);
    \draw[style=dotted] (14,0)--(17,0);
    \draw (17,0)--(22,0);
    \draw[color=purple] (4,0)--(4,1)node[above]{$r_n=\hat{r}_n$};
    \draw[color=purple] (9,0)--(9,1)node[above]{$r_{i+1}=\hat{r}_{i+1}$};
    \draw[color=purple] (13,0)--(13,1)node[above]{$r_i=\hat{r}_i$};
    \draw[color=purple] (18,0)--(18,1)node[above]{$r_2=\hat{r}_2$};
    \draw[color=purple] (22,0)--(22,1)node[above]{$r_1=\hat{r}_1=0$};
    \draw[color=red] (1,0)--(1,-2)node[below]{$\hat{s}_n$};
    \draw[color=red] (10,0)--(10,-2)node[below]{$\hat{s}_i$};
    \draw[color=red] (19,0)--(19,-2)node[below]{$\hat{s}_1$};
    \draw[color=blue] (2,0)--(2,-1)node[below]{$s_n$};
    \draw[color=blue] (11,0)--(11,-1)node[below]{$s_i$};
    \draw[color=blue] (20,0)--(20,-1)node[below]{$s_1$};
    \draw (-2,-3) rectangle (24,3);
\end{tikzpicture}
\label{subfig: num zero phi 1}
}
\subfigure[For $p/q$ with $p(0)\neq0$, considering $\Phi_1$, set $\hat{p}=xp$ and $\hat{q}=xq+p$. The situation is exactly identical to \Cref{subfig: num zero phi 1} but for the addition of one more root of each at/near $x=0$ and shifting of indices.]
{\begin{tikzpicture}[scale=.6]
    \draw[<-] (-1,0)--(5,0);
    \draw[style=dotted] (5,0)--(8,0);
    \draw (8,0)--(14,0);
    \draw[style=dotted] (14,0)--(17,0);
    \draw (17,0)--(22,0);
    \draw[color=purple] (4,0)--(4,1)node[above]{$r_n=\hat{r}_{n+1}$};
    \draw[color=purple] (9,0)--(9,1)node[above]{$r_{i}=\hat{r}_{i+1}$};
    \draw[color=purple] (13,0)--(13,1)node[above]{$r_{i-1}=\hat{r}_i$};
    \draw[color=purple] (18,0)--(18,1)node[above]{$r_1=\hat{r}_2$};
    \draw[color=purple] (22,0)--(22,1)node[above]{$\hat{r}_1=0$};
    \draw[color=red] (1,0)--(1,-2)node[below]{$\hat{s}_{n+1}$};
    \draw[color=red] (10,0)--(10,-2)node[below]{$\hat{s}_i$};
    \draw[color=red] (19,0)--(19,-2)node[below]{$\hat{s}_1$};
    \draw[color=blue] (2,0)--(2,-1)node[below]{$s_n$};
    \draw[color=blue] (11,0)--(11,-1)node[below]{$s_{i-1}$};
    \draw (-2,-3) rectangle (24,3);
\end{tikzpicture}
\label{subfig: num nonzero phi 1}
}
\subfigure[For $p/q$ with $p(0)=0$, considering $\Phi_0$, set $\hat{p}=q$ and $\hat{q}=q+p/x$.]{
\begin{tikzpicture}[scale=.6]
    \draw[<-] (-1,0)--(4,0);
    \draw[style=dotted] (4,0)--(7,0);
    \draw (7,0)--(14,0);
    \draw[style=dotted] (14,0)--(17,0);
    \draw (17,0)--(22,0);
    \draw[color=blue] (3,0)--(3,1)node[above]{$r_n$};
    \draw[color=blue] (10,0)--(10,1)node[above]{$r_{i}$};
    \draw[color=blue] (18,0)--(18,1)node[above]{$r_2$};
    \draw[color=blue] (22,0)--(22,1)node[above]{$r_1=0$};
    \draw[color=red] (2,0)--(2,2)node[above]{$\hat{r}_n$};
    \draw[color=red] (8,0)--(8,2)node[above]{$\hat{r}_{i}$};
    \draw[color=red] (13,0)--(13,2)node[above]{$\hat{r}_{i-1}$};
    \draw[color=red] (20,0)--(20,2)node[above]{$\hat{r}_1$};
    \draw[color=blue] (2,0)--(2,-1)node[below]{$s_n$};
    \draw[color=blue] (8,0)--(8,-1)node[below]{$s_{i}$};
    \draw[color=blue] (13,0)--(13,-1)node[below]{$s_{i-1}$};
    \draw[color=blue] (20,0)--(20,-1)node[below]{$s_1$};
    \draw[color=red] (1,0)--(1,-2)node[below]{$\hat{s}_{n}$};
    \draw[color=red] (11,0)--(11,-2)node[below]{$\hat{s}_{i}$};
    \draw[color=red] (19,0)--(19,-2)node[below]{$\hat{s}_1$};
    \draw (-2,-3) rectangle (24,3);
\end{tikzpicture}
\label{subfig: num zero phi 0}
}
\subfigure[For $p/q$ with $p(0)\neq 0$, considering $\Phi_0$, set $\hat{p}=xq$ and $\hat{q}=xq+p$. The situation is identical to \Cref{subfig: num zero phi 0} except for the addition of one root of each at/near $x=0$ and shifting of indices.]{
\begin{tikzpicture}[scale=.6]
    \draw[<-] (-1,0)--(4,0);
    \draw[style=dotted] (4,0)--(7,0);
    \draw (7,0)--(14,0);
    \draw[style=dotted] (14,0)--(17,0);
    \draw (17,0)--(22,0);
    \draw[color=blue] (3,0)--(3,1)node[above]{$r_n$};
    \draw[color=blue] (10,0)--(10,1)node[above]{$r_{i}$};
    \draw[color=blue] (18,0)--(18,1)node[above]{$r_1$};
    \draw[color=red] (22,0)--(22,2)node[above]{$\hat{r}_1=0$};
    \draw[color=red] (2,0)--(2,2)node[above]{$\hat{r}_{n+1}$};
    \draw[color=red] (8,0)--(8,2)node[above]{$\hat{r}_{i+1}$};
    \draw[color=red] (13,0)--(13,2)node[above]{$\hat{r}_{i}$};
    \draw[color=blue] (2,0)--(2,-1)node[below]{$s_n$};
    \draw[color=blue] (8,0)--(8,-1)node[below]{$s_{i}$};
    \draw[color=blue] (13,0)--(13,-1)node[below]{$s_{i-1}$};
    \draw[color=red] (20,0)--(20,-2)node[below]{$\hat{s}_1$};
    \draw[color=red] (1,0)--(1,-2)node[below]{$\hat{s}_{n}$};
    \draw[color=red] (11,0)--(11,-2)node[below]{$\hat{s}_{i}$};
    \draw (-2,-3) rectangle (24,3);
\end{tikzpicture}
\label{subfig: num nonzero phi 0}
}
\caption{Various cases for the roots/poles of $\Phi_i(p/q)=\hat{p}/\hat{q}$ compared to those of $p/q$. Roots of $p,\hat{p}$ listed as $r_i,\hat{r}_i$ in order above the axis, with roots of $q,\hat{q}$ listed as $s_i,\hat{s}_i$ below the axis. Roots/poles of $p/q$ in blue, with those of $\hat{p}/\hat{q}$ in red with larger vertical offset. When roots of $p$,$\hat{p}$ coincide they are drawn in purple at the same vertical offset.}
\label{fig: all the subfigures}\end{center}
\end{figure}
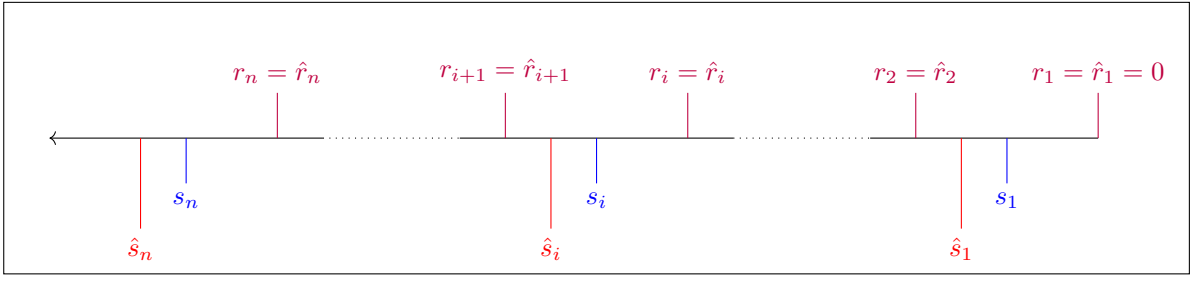
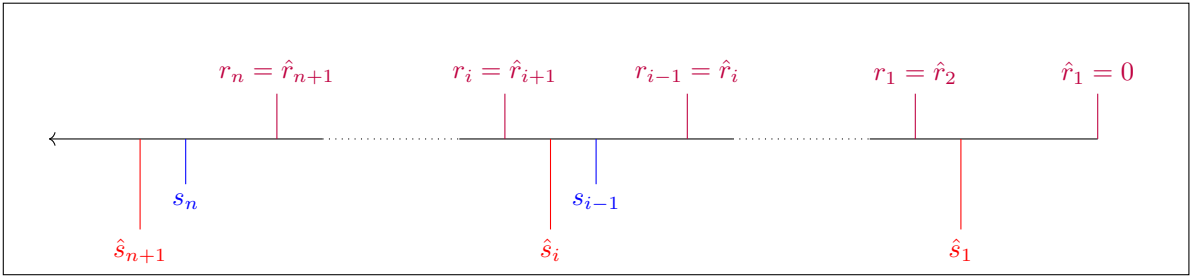
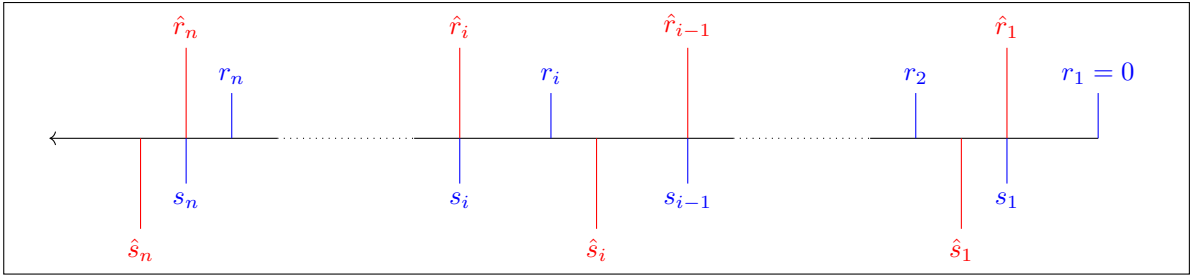
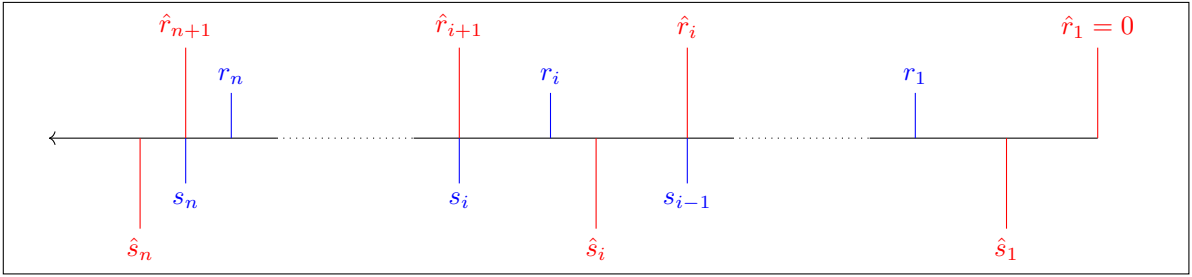

\begin{lemma}\label{lem: subtree roots to -infty}
    For any vertex $v=[a_1,\ldots,a_k]_x=p/q$, let $r_i$ and $s_i$ be the ordered roots and poles for $i=1,\ldots,n$. Then let $\hat{s}_i(N)$ denote the ordered poles of $\Phi_1^{N-1} \circ \Phi_0(v)=[N,a_1,a_2,\ldots,a_k]$. Then
    \[ \lim_{N \rightarrow \infty} \hat{s}'_{i}(N)=\begin{cases}
        s_{i} & (i < n)\\ 
         -\infty & (i=n+1)\\ 
    \end{cases}\]
    \begin{proof}
        There is not any difference if $p(0)=0$ or $p(0) \neq 0$; one may verify that in either case,
        \[ \Phi_1 \circ \Phi_0 \left( \frac{p}{q}\right) = \frac{xq}{(x+1)q+p}\]
        and now we have a numerator that evaluates zero to zero: for any $N \geq 2$, from Lemma  \ref{lem: CF notation for vertices}  we have
        \[ \Phi_1^{N-1} \circ \Phi_0 \left( \frac{p}{q}\right) = \frac{xq}{(x+N-1)q+p}.\]
        The poles are given by solutions to
        \[\frac{p(x)}{q(x)} = -x-(N-1).\]
        On each interval $(s_{i},r_{i}]$, the function $p/q$ is increasing from $-\infty$ to zero, so as $N \rightarrow \infty$ we must approach where $p/q\rightarrow -\infty$, i.e. $s_{i}$.
        To the left of $s_n$ we have a vertical asymptote heading to $+ \infty$, and then as $x \rightarrow -\infty$,  $p/q$ decreases to one (see \Cref{fig: rational plot} for an example). Specifically, $p/q>1$ on $(-\infty,s_n)$. Therefore, if $p/q=-x-(N-1)$ we find that $x<-N$.
    \end{proof}
\end{lemma}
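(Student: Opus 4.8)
The plan is to reduce the statement to an analysis of the equation $p(x)/q(x) = -x-(N-1)$ via the intermediate-value theorem applied to the strictly increasing function $g(x) := x + p(x)/q(x)$. First I would record the normalized form $\Phi_1^{N-1} \circ \Phi_0(p/q) = xq/((x+N-1)q+p)$ obtained from \lref{lem: CF notation for vertices}, so that the poles $\hat s_i(N)$ are exactly the real solutions of $g(x) = -(N-1)$, together with the fact (from \Cref{prop: root ordering}) that there are precisely $n+1$ such solutions, one in each of the intervals $(s_i, s_{i-1})$ for $i=1,\ldots,n$ (with $s_0 := 0$ when appropriate, or $s_0 := +\infty$), and one in $(-\infty, s_n)$. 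By \Cref{prop: increasing functions}, $g$ is strictly increasing on each pole-free interval, and $p/q \to 1$ as $x \to -\infty$ while $p/q$ has a simple pole at each $s_i$; this gives the precise monotonicity behaviour of $g$ on each such interval that the argument needs.

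Next I would treat the two cases of the claim. For $i \le n$: on the interval $(s_i, r_i]$ the function $p/q$ increases from $-\infty$ to $0$, hence $g(x) = x + p/q$ also runs from $-\infty$ (as $x \downarrow s_i$) up to $r_i \le 0$, so for every sufficiently large $N$ the unique solution of $g(x) = -(N-1)$ lying in this interval exists and, since $g$ is continuous and strictly increasing with $g(x)\downarrow-\infty$ only as $x \downarrow s_i$, that solution must converge to $s_i$ as $N \to \infty$. (Here I would note that the labelling of poles of the child vertex as $\hat s_i$ matches $s_i$ for $i<n$ per \Cref{fig: all the subfigures}, so no index bookkeeping is lost.) For the extremal pole $i = n+1$: on $(-\infty, s_n)$ we have $p/q > 1$ (it descends to $1$ at $-\infty$ and rises to $+\infty$ at $s_n^-$), so $g(x) = x + p/q(x) < x + \big(1 + (\text{something bounded near } -\infty)\big)$; more precisely, $p/q = -x - (N-1)$ forces $-x-(N-1) = p/q > 1$, i.e. $x < -N$, so $\hat s_{n+1}(N) < -N \to -\infty$. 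This is the complete argument for the second case and requires no limiting refinement at all.

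The one point requiring a little care — and the closest thing to an obstacle — is making sure the \emph{count} of poles is exactly right across the $p(0) = 0$ versus $p(0)\neq 0$ dichotomy, i.e. that the composition $\Phi_1^{N-1}\circ\Phi_0$ really does produce the vertex $[N,a_1,\ldots,a_k]_x$ with a numerator vanishing at $x=0$ in both cases, so that the uniform formula $xq/((x+N-1)q+p)$ is valid and \Cref{prop: root ordering} applies verbatim to the child. I would verify once, directly from \Cref{eqn: Phi}, that $\Phi_1 \circ \Phi_0(p/q) = xq/((x+1)q+p)$ irrespective of the value of $p(0)$ (the removable discontinuity at $x=0$ is handled identically in both branches), which then bootstraps to all $N \ge 2$ by \lref{lem: CF notation for vertices}; the case $N=1$ is $\Phi_0(p/q)$ itself and is covered by \Cref{subfig: num zero phi 0} and \Cref{subfig: num nonzero phi 0}. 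Everything else is a routine application of monotonicity and the intermediate-value theorem, so I do not anticipate any serious difficulty beyond this bookkeeping.
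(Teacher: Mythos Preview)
Your proposal is correct and follows essentially the same route as the paper: you derive the uniform formula $\Phi_1^{N-1}\circ\Phi_0(p/q)=xq/((x+N-1)q+p)$ (after checking the $p(0)$ dichotomy collapses), rewrite the pole condition as $p/q=-x-(N-1)$, use monotonicity on each $(s_i,r_i]$ to push the solution toward $s_i$, and use $p/q>1$ on $(-\infty,s_n)$ to force $x<-N$ for the leftmost pole. The paper's proof is the same argument, stated a bit more tersely and without the explicit appeal to \Cref{prop: increasing functions} or the index bookkeeping you mention.
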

\begin{lemma}\label{lem: subtree roots to zero}
    For any vertex $v=[a_1,\ldots,a_k]_x=p/q$, let $r_i$ and $s_i$ be the ordered roots and poles for $i=1,\ldots,n$. Then let $\hat{s}_i(N)$ denote the ordered poles of $\left(\Phi_1 \circ \Phi_0\right)^{N}(v)=[2,2,\ldots,2,a_1,a_2,\ldots,a_k]$ where there are $N$ consecutive twos. Then
    \[ \lim_{N \rightarrow \infty} \hat{s}_{1}(N)=0\]
    \begin{proof}
        As remarked in \Cref{lem: subtree roots to -infty}, $\Phi_1\circ \Phi_0$ will always create a $p/q$ with $p(0)=0$, so without loss of generality we may assume this holds for the vertex $v$ we begin with. Then
        \[\Phi_1 \circ \Phi_0 \left( \frac{p}{q}\right) = \frac{xq}{(x+1)q+p},\]
        and we are interested in the unique solution to 
        \[ \frac{p(x)}{q(x)} = -(x+1)\]
        in the interval $(s_1,0]$. In this interval, we have $(x-r_i)/(x-s_i)<1$ for $i \geq 2$, so the above becomes
        \[-(x+1) = \frac{(x-r_1)}{(x-s_1)} \cdot \frac{(x-r_2)\cdots (x-r_n)}{(x-s_2)\cdots (x-r_n)}< \frac{x-r_1}{x-s_1}=\frac{x}{x-s_1}\]
        As $x-s_1>0$ we transform this into
        \begin{align*}
        (x+1)(x-s_1)+x&>0    \\
        x^2+(2-s_1)x-s_1&>0
        \end{align*}
        This quadratic has roots at
        \[x_{\pm}=\frac{-(2-s_1)\pm\sqrt{(2-s_1)^2+4s_1}}{2}=\frac{(s_1-2)\pm \sqrt{s_1^2+4}}{2}.\]
        So the unique pole in $(s_1,0]$ must either be larger than $x_+$ or lesser than $x_-$. But
        \[ x_-=\frac{(s_1-2)-\sqrt{s_1^2+4}}{2}<\frac{s_1-2-\max(|s_1|,2)}{2}=\begin{cases}
            \frac{s_1}{2}-2 & s_1\geq -2\\ s_1-1 & s_1 \leq -2
        \end{cases} \qquad \leq s_1 \textrm{ in either case}\]
        So the unique pole in the interval $(s_1,0]$ is in fact at least as large as $x_+$:
        \[x_+= \frac{(s_1-2)+\sqrt{s_1^2+4}}{2}>\frac{s_1-2+2}{2}=\frac{s_1}{2}\]
        Note that we also have:
        \[x_+= \frac{(s_1-2)+\sqrt{s_1^2+4}}{2}<\frac{(s_1-2)+|s_1|+2}{2}=0.\]
        
        In other words, the largest pole of $\Phi_1\circ \Phi_0(v)$ must be at least half as large as the largest pole of $v$. The result follows.        
    \end{proof}
\end{lemma}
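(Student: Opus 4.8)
The plan is to follow the largest pole along the path $v,\ \Phi_1\circ\Phi_0(v),\ (\Phi_1\circ\Phi_0)^2(v),\dots$ and show that it is driven all the way to $0$. First I would normalise: since $\Phi_1\circ\Phi_0(p/q)=\frac{xq}{(x+1)q+p}$ always produces a numerator vanishing at $x=0$, after one harmless application of $\Phi_1\circ\Phi_0$ (which does not affect the limit) we may assume the base vertex $v=p/q$ already satisfies $p(0)=0$, so by \Cref{prop: root ordering} its largest root is $r_1=0$ and its largest pole $s_1<0$. Write $v_m=(\Phi_1\circ\Phi_0)^m(v)=p_m/q_m$ and let $\sigma_m$ be its largest pole, with $\sigma_0:=s_1$; every $v_m$ again has $p_m(0)=0$, so this structure persists down the path.

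Next I would pin down $\sigma_{m+1}$. From $v_{m+1}=\frac{xq_m}{(x+1)q_m+p_m}$ the poles of $v_{m+1}$ are exactly the solutions of $p_m(x)/q_m(x)=-(x+1)$, and on the interval $(\sigma_m,0]$ the function $v_m$ increases from $-\infty$ to $0$ (by \Cref{prop: increasing functions,prop: root ordering}) while $-(x+1)$ is decreasing; hence there is exactly one such solution in $(\sigma_m,0]$, namely $\sigma_{m+1}$, and $\sigma_m<\sigma_{m+1}<0$. Thus $(\sigma_m)$ is a strictly increasing sequence bounded above by $0$, so it converges to some $L\le 0$, and the task reduces to excluding $L<0$.

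The core step, and where I expect the real work to lie, is a quantitative one-step estimate relating $\sigma_{m+1}$ to $\sigma_m$. On $(\sigma_m,0)$ one factors $v_m(x)=\frac{x}{x-\sigma_m}\prod_{i\ge 2}\frac{x-r_i^{(m)}}{x-s_i^{(m)}}$, where every factor with $i\ge 2$ lies in $(0,1)$ there while $\frac{x}{x-\sigma_m}<0$; comparing $v_m$ with this leading factor and substituting $x=\sigma_{m+1}$ into $v_m(x)=-(x+1)$ converts the pole equation, after clearing the positive quantity $\sigma_{m+1}-\sigma_m$, into a quadratic inequality in $\sigma_{m+1}$ whose coefficients involve only $\sigma_m$. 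Analysing the roots of that quadratic — tracking signs carefully and using $\sigma_m<0$ and $s_1\le -1$ — should confine $\sigma_{m+1}$ to an interval whose endpoints tend to $0$ with $\sigma_m$, equivalently give a recursive bound $\sigma_{m+1}\ge\varphi(\sigma_m)$ for an explicit $\varphi$ with $\varphi(t)>t$ on $(-\infty,0)$; passing to the limit then forces $L=\varphi(L)$, hence $L=0$. The two things I would watch most carefully are getting the direction of the cleared inequality right, and checking that the one-step bound is strong enough that iterating it actually reaches $0$ rather than stalling at a negative limit.
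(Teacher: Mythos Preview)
Your plan is identical to the paper's: reduce to $p(0)=0$, read off the new poles as solutions of $v_m(x)=-(x+1)$, compare $v_m$ with its leading factor $x/(x-\sigma_m)$, and extract a quadratic inequality. Your caution about the inequality direction is exactly the right worry, and in fact this is where both your outline and the paper's own argument break down.

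On $(\sigma_m,0)$ the leading factor $x/(x-\sigma_m)$ is \emph{negative}, while each remaining factor $(x-r_i)/(x-s_i)$ for $i\ge 2$ lies in $(0,1)$. Multiplying a negative number by something in $(0,1)$ makes it \emph{larger}, so the correct comparison is $v_m(x)>x/(x-\sigma_m)$, not $<$. Substituting $x=\sigma_{m+1}$ and clearing the positive quantity $\sigma_{m+1}-\sigma_m$ gives
\[
\sigma_{m+1}^2+(2-\sigma_m)\sigma_{m+1}-\sigma_m<0,
\]
which places $\sigma_{m+1}$ \emph{between} the roots $x_\pm=\tfrac{1}{2}\big((\sigma_m-2)\pm\sqrt{\sigma_m^2+4}\big)$. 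That is an \emph{upper} bound $\sigma_{m+1}<x_+$, not the lower bound $\sigma_{m+1}\ge\varphi(\sigma_m)$ you need; it says nothing about how far right $\sigma_{m+1}$ must move. Concretely, starting from $v=[2,2]_x$ one has $\sigma_0=\tfrac{-3+\sqrt5}{2}\approx-0.382$ and $\sigma_1=-2+\sqrt3\approx-0.268$, so already $\sigma_1<\sigma_0/2$ and no bound of the shape $\sigma_{m+1}\ge c\,\sigma_m$ with fixed $c<1$ can hold.

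Worse, the statement itself fails. Take $v=[2]_x$, so $(\Phi_1\circ\Phi_0)^N(v)=[2,\dots,2]_x$ with $N{+}1$ twos, whose denominator $q_{N+1}$ satisfies the recurrence $q_n=(x+1)q_{n-1}+xq_{n-2}$ with characteristic roots $\lambda_\pm=\tfrac{1}{2}\big((x+1)\pm\sqrt{x^2+6x+1}\big)$. For $x\in(-3+2\sqrt2,\,0)$ the discriminant $x^2+6x+1$ is positive and $\lambda_+>\lambda_->0$ (their product is $-x>0$, their sum $x+1>0$), so $q_n(x)=(\lambda_+^{\,n+1}-\lambda_-^{\,n+1})/(\lambda_+-\lambda_-)>0$ for every $n$. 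Hence no $q_n$ has a zero in $(-3+2\sqrt2,0)$, and the largest pole $\hat s_1(N)$ is bounded above by $-3+2\sqrt2\approx-0.172$ for all $N$; numerically $-1,\,-0.382,\,-0.268,\,-0.228,\,-0.209,\dots$ converges to $-3+2\sqrt2$, not to $0$. So no argument along these lines can succeed: the conclusion $\lim_N\hat s_1(N)=0$ is false as stated.
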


We now begin our work towards proving \Cref{th:dense}. 
\begin{lemma}
Suppose that $v_0=[a_1,\ldots,a_k]_x=p/q$ is any vertex with $p(0)=0$ and $q$ is of degree at least two. Let $s_n,\ldots,s_1$ be the ordered roots of $q$, and let $\alpha \in (s_i,s_{i-1})$ for some choice of $i \geq 2$. Let
\[
b=b(v,\alpha)=\max\left\{1,1-\floor{\frac{p_i(\alpha)}{q_i(\alpha)}+\alpha}\right\} = \begin{cases} 1 &  \alpha+ \frac{p(\alpha)}{q(\alpha)} \geq 0 \\ 1-\floor{\frac{p_i(\alpha)}{q_i(\alpha)}+\alpha} &  \alpha + \frac{p(\alpha)}{q(\alpha)}<0.\end{cases}
\]
For each positive integer $n$, let $\hat{s}(n)$ be the pole of $[n,a_1,\ldots,a_k]=x/(x+(n-1)+p/q)$ which is closest to $\alpha$ without being larger than $\alpha$. Then 
\[\hat{s}(b) = \max_{n=1,2,\ldots} \hat{s}(n).\]
In other words, among all $[n,a_1,\ldots,a_k]$, setting $n=b$ has the closest possible pole to $\alpha$ which is not larger than $\alpha$.
\begin{proof}
As in the proof of \Cref{lem: subtree roots to -infty}, the poles of any such $[n,a_1,\ldots,a_k]$ are given by solutions to
\[ x+\frac{p(x)}{q(x)} = -(n-1).\]
For any integer (positive or negative), define $\zeta_n$ to be the unique $x \in (s_i, s_{i-1})$ which solves the above, i.e.
\[n =1 - \left(\zeta_n + \frac{p(\zeta_n)}{q(\zeta_n)}\right)\]
As $x+p/q$ is increasing, these $\zeta$ are decreasing, and with poles $s_i, s_{i-1}$ we have both $\zeta_n \rightarrow s_i$ as $n \rightarrow \infty$, and $\zeta_n \rightarrow s_{i-1}$ as $n\rightarrow -\infty$.
So we select $b$ to be the smallest positive integer so that $\zeta_n \leq \alpha$.

If $\alpha+p(\alpha)/q(\alpha)<0$, then we have
\[ \zeta_b \leq \alpha < \zeta_{b-1}.\]
As $p/q$ is increasing, we similarly obtain
\[ \frac{p(\zeta_b)}{q(\zeta_b)}+\zeta_b \leq \frac{p(\alpha)}{q(\alpha)}+\alpha<\frac{p(\zeta_{b-1})}{q(\zeta_{b-1})}+\zeta_{b-1}.\]
From our definition of $\zeta_n$, then:
\[-(b-1) \leq \frac{p(\alpha)}{q(\alpha)}+\alpha<-(b-2),\]
from which we directly conclude that
\[ b =1- \floor{\frac{p(\alpha)}{q(\alpha)}+\alpha},\]
and under the assumption that $\alpha+p(\alpha)/q(\alpha)<0$ this is certainly not less than one.

If $\alpha+p(\alpha)/q(\alpha)\geq 0$, then certainly $b=1$ is the smallest positive integer $n$ so that $\zeta_n \leq \alpha$, and the claim directly follows.
\end{proof}
\end{lemma}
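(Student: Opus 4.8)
The plan is to reduce the whole statement to the behaviour of the single strictly increasing function $g(x)=x+p(x)/q(x)$ on the interval $(s_i,s_{i-1})$. By \Cref{lem: CF notation for vertices} the vertex $[n,a_1,\ldots,a_k]_x$ equals $x/(x+(n-1)+p/q)$, so its poles are exactly the real solutions of $g(x)=-(n-1)$. First I would record the shape of $g$ on $(s_i,s_{i-1})$: by \Cref{prop: root ordering} the only root of $p$ in this interval is $r_i$, and the endpoints $s_i,s_{i-1}$ are roots of $q$ but not of $p$, so $p/q$ is finite on $(s_i,s_{i-1})$ away from $r_i$ and blows up at both endpoints; combined with $(p/q)'>0$ from \Cref{prop: increasing functions} this shows $g$ is strictly increasing on $(s_i,s_{i-1})$ with $\lim_{x\to s_i^+}g(x)=-\infty$ and $\lim_{x\to s_{i-1}^-}g(x)=+\infty$, hence $g$ maps $(s_i,s_{i-1})$ bijectively onto $\mathbb{R}$.

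Next, for every integer $n$ (not only positive ones) let $\zeta_n$ be the unique point of $(s_i,s_{i-1})$ with $g(\zeta_n)=-(n-1)$; this is the one pole of $[n,a_1,\ldots,a_k]_x$ that lies in the interval containing $\alpha$ (a short check shows there is no cancellation at $\zeta_n$, since $\zeta_n\neq 0$ and $q(\zeta_n)\neq 0$). Since $g$ is increasing, $n\mapsto\zeta_n$ is strictly decreasing, with $\zeta_n\to s_i^+$ as $n\to+\infty$ and $\zeta_n\to s_{i-1}^-$ as $n\to-\infty$. The second ingredient is to relate $\hat s(n)$ to $\zeta_n$: every pole of $[n,a_1,\ldots,a_k]_x$ lying strictly to the left of $s_i$ is smaller than $s_i<\alpha$, while every pole to the right of $s_{i-1}$ exceeds $\alpha$; consequently $\hat s(n)=\zeta_n$ when $\zeta_n\le\alpha$, and $\hat s(n)<s_i$ when $\zeta_n>\alpha$. (That $\hat s(n)$ exists is clear, since by \Cref{prop: root ordering} each $[n,a_1,\ldots,a_k]_x$ has a pole in $(-\infty,s_n)\subseteq(-\infty,\alpha)$.)

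With these two facts the optimisation is immediate. Because $\zeta_n$ decreases to $s_i<\alpha$, there is a smallest positive integer $b$ with $\zeta_b\le\alpha$; for it, $\hat s(b)=\zeta_b\in(s_i,\alpha]$. For positive $n>b$ we get $\hat s(n)=\zeta_n<\zeta_b$, and for positive $n<b$ (if any) we have $\zeta_n>\alpha$, hence $\hat s(n)<s_i<\zeta_b=\hat s(b)$; thus $\hat s(b)=\max_n\hat s(n)$. To identify $b$, note that since $g$ increases, $\zeta_n\le\alpha$ is equivalent to $-(n-1)=g(\zeta_n)\le g(\alpha)=\alpha+p(\alpha)/q(\alpha)$, i.e.\ to $n\ge 1-g(\alpha)$; the smallest positive integer with this property is $\max\{1,\lceil 1-g(\alpha)\rceil\}=\max\{1,1-\floor{g(\alpha)}\}$, and distinguishing the cases $g(\alpha)\ge 0$ and $g(\alpha)<0$ reproduces the two-line formula in the statement.

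The step I expect to require the most care is the first one: justifying that $p/q$ really tends to $-\infty$ at $s_i^+$ and to $+\infty$ at $s_{i-1}^-$, so that $g$ is onto all of $\mathbb{R}$, $\zeta_n$ is defined for every $n\in\mathbb{Z}$, and the $\zeta_n$ sweep the whole interval as $n$ varies; together with the bookkeeping that distinguishes the case $\hat s(n)=\zeta_n$ from $\hat s(n)<s_i$. Everything after that is a monotonicity argument plus the elementary identity $\lceil 1-y\rceil=1-\floor{y}$.
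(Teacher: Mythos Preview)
Your proposal is correct and follows essentially the same approach as the paper: both define $\zeta_n$ as the unique solution of $x+p(x)/q(x)=-(n-1)$ in $(s_i,s_{i-1})$, use monotonicity of $x+p/q$ to order the $\zeta_n$, and then identify $b$ as the smallest positive integer with $\zeta_b\le\alpha$. You are in fact a bit more careful than the paper in two places---you explicitly justify why $g$ maps $(s_i,s_{i-1})$ onto all of $\mathbb{R}$, and you spell out the bookkeeping relating $\hat s(n)$ to $\zeta_n$ (the paper tacitly identifies them)---but these are elaborations rather than a different route.
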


We may now inductively define a sequence of vertices, a path in our tree beginning at some vertex $v$. We will show that if $\alpha<-1$, this path generates a sequence of poles converging to $\alpha$.

For any $v_0=[a_1,\ldots,a_k]_x=p_0/q_0$ a vertex with $p(0)=0$ and $q$ of degree at least two, and $\alpha$ between the least and largest poles of $v$ while not being equal to a pole of any vertex in the subtree rooted at $v_0$, inductively define $b_{i+1} = b(v_i,\alpha)$ where
\begin{equation}
    \label{eqn: function b}
    b(p/q,\alpha) = \max\{1,1-\floor{\frac{p(\alpha)}{q(\alpha)}+\alpha}\} = \begin{cases} 1 &  \alpha+ \frac{p(\alpha)}{q(\alpha)} \geq 0 \\ 1-\floor{\frac{p(\alpha)}{q(\alpha)}+\alpha} &  \alpha + \frac{p(\alpha)}{q(\alpha)}<0.\end{cases}
\end{equation}

These $b_i$ are a function of both $\alpha$ and $v_0$, but as both will generally be fixed we frequently suppress that notation.

For any $v_0=[a_1,\ldots,a_k]_x=p/q$ and $\alpha <0$, compute $b_i=b(v_i,\alpha)$ as in \Cref{eqn: function b} and define 
\[v_{i} = [b_i,v_{i-1}] = \frac{x}{x+(b_i-1) + \frac{p_{i-1}(x)}{q_{i-1}(x)}} = \frac{x}{x-\floor{\alpha+ \frac{p_{i-1}(\alpha)}{q_{i-1}(\alpha)}} + \frac{p_{i-1}(x)}{q_{i-1}(x)}}.\]

\begin{lemma}
    \label{lem: b is one only once}
    For any $v_0=[a_1,\ldots,a_k]_x=p_0/q_0$, for any $\alpha<0$, every $b_i \geq 2$ except perhaps $b_1=1$.
    \begin{proof}
        From \Cref{eqn: function b} we see that if $\alpha+p_i(\alpha)/q_i(\alpha)<0$, then we have $b_{i+1} \geq 2$. But for any $v_i$, let $s<\alpha<\hat{s}$ be two consecutive roots of $q_i$. Then in $v_{i+1}$ both $s$ and $\hat{s}$ are two consecutive \textit{roots}:
        \[v_{i+1} = \frac{x}{x+b_{i+1}-1+\frac{p_i}{q_i}}=\frac{xq_i}{(x+b_{i+1}-1)q_i+p_i}.\]
        As our choice of $b_{i+1}$ corresponded to $\zeta_{b_{i+1}}$ being a pole of $v_{i+1}$, and $\zeta_{b_{i+1}}$ is \textit{less} than $\alpha$, we have $\alpha$ between a pole and a root of the increasing function $v_{i+1}$, so $p_{i+1}(\alpha)/q_{i+1}(\alpha)<0$. Therefore $b_{i+2} \geq 2$. So the only $b_i$ which could be less than two is $b_1$ (and then if and only if $\alpha+p_0(\alpha)/q_0(\alpha)>0$).
    \end{proof}
\end{lemma}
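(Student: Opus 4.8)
The plan is to reduce the whole statement to the single inequality $v_i(\alpha)<0$ for every $i\ge 1$. Indeed, from \eqref{eqn: function b} the condition $b_{i+1}\ge 2$ is equivalent to $\alpha+p_i(\alpha)/q_i(\alpha)=\alpha+v_i(\alpha)<0$; and once we know $v_i(\alpha)<0$, then since $\alpha<0$ we automatically get $\alpha+v_i(\alpha)<\alpha<0$. So it suffices to prove $v_i(\alpha)<0$ for all $i\ge 1$: this forces $b_j\ge 2$ for all $j\ge 2$, and leaves only $b_1=b(v_0,\alpha)$ — which is read off directly from $v_0$ — as a possible exception. (Throughout one assumes $\alpha$ avoids the poles of every vertex, as otherwise the $b_i$ are not all defined.)

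The engine is one structural observation about the relevant child. Writing $v_{i+1}=[b_{i+1},v_i]=xq_i/((x+b_{i+1}-1)q_i+p_i)$, the poles of $v_i$ all become roots of $v_{i+1}$ — no cancellation occurs there, since a pole of $v_i$ is never a root of $p_i$ by \Cref{prop: root ordering} — while the poles of $v_{i+1}$ are the solutions of $x+v_i(x)=-(b_{i+1}-1)$, exactly one in each gap between consecutive poles of $v_i$ because $x+v_i$ is strictly increasing there (\Cref{prop: increasing functions}). By the very choice of $b_{i+1}$ — this is the content of the preceding lemma — the solution $\zeta$ in the gap containing $\alpha$ satisfies $\zeta\le\alpha$, in fact $\zeta<\alpha$. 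Combined with the interlacing of zeros and poles in \Cref{prop: root ordering}, this places $\alpha$ strictly between the pole $\zeta$ of $v_{i+1}$ and the next root of $v_{i+1}$ to its right (namely the pole of $v_i$ already sitting to the right of $\alpha$); on that interval the increasing function $v_{i+1}$ climbs from $-\infty$ up to $0$, whence $v_{i+1}(\alpha)<0$.

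This is precisely the inductive step: if $v_i(\alpha)<0$ for some $i\ge 1$, then $b_{i+1}\ge 2$, which is exactly what makes the displayed formula for $v_{i+1}$ valid with no cancellation at $x=0$, and the observation above yields $v_{i+1}(\alpha)<0$. The base case $i=1$ — establishing $v_1(\alpha)<0$ — needs a short case analysis, since $b_1$ may equal $1$ and $v_0$ carries no normalization. I would split according to which of the $n+1$ intervals determined by the poles $s_n<\cdots<s_1$ of $v_0$ contains $\alpha$. If $\alpha$ lies in a bounded gap between two poles of $v_0$, or in the leftmost ray $(-\infty,s_n)$, the argument of the previous paragraph applies verbatim (the root at $0$ plays no role). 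If $\alpha\in(s_1,0)$, there are two sub-cases: when $p_0(0)\neq 0$, the value $0$ is a root of $v_1$ and $\alpha$ falls between the new pole and $0$; when $p_0(0)=0$, the function $v_0$ is already negative on $(s_1,0)$, which forces $b_1\ge 2$ (so again no cancellation at $0$) and lands $\alpha$ between the new pole and the root at $0$. Either way $v_1(\alpha)<0$.

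The main obstacle is exactly this base case together with the bookkeeping of removable discontinuities at $x=0$: a cancellation at $x=0$ occurs precisely when $b_{i+1}=1$ and $p_i(0)=0$, and it alters the zero/pole configuration of the child, so one must track whether $p_i(0)=0$ and whether $b_{i+1}=1$. In the inductive range $i\ge 1$ one already has $b_{i+1}\ge 2$ and the issue disappears, so the only genuinely delicate transition is $v_0\to v_1$, dispatched by the case split above.
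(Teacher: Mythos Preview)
Your proposal is correct and follows essentially the same route as the paper: reduce to $v_i(\alpha)<0$ for $i\ge 1$, then use that poles of $v_i$ become zeros of $v_{i+1}$ while the choice of $b_{i+1}$ places the new pole in the relevant gap to the left of $\alpha$, so $\alpha$ sits between a pole and a zero of the increasing $v_{i+1}$. You are more scrupulous than the paper about the base step $v_0\to v_1$ (splitting on whether $\alpha$ lies in a bounded gap, the leftmost ray, or $(s_1,0)$, and on whether $p_0(0)=0$), whereas the paper tacitly relies on its preceding setup that $\alpha$ lies between two poles of $v_0$; but the underlying mechanism is identical.
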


\begin{proof}[Proof of \Cref{th:dense}]
    If any $v_i$ has a pole exactly equal to $\alpha$, then certainly $\alpha$ is in the closure of $R$ (the set of all poles of vertices in the subtree rooted at $v_0$).

    Otherwise, for each $i=0,1,\ldots$ let $\zeta_i,\hat{\zeta}_i$ be the solutions to 
    \[\zeta_i+\frac{p_i(\zeta_i)}{q_i(\zeta_i)} = b_i-1, \qquad\hat{\zeta}_i+\frac{p_i(\hat{\zeta}_i) }{q_i(\hat{\zeta}_i) }= b_i\]
    These satisfy several useful relations, all of which follow from how we defined $b_i$:
    \[ \zeta_1<\zeta_2<\cdots<\alpha<\hat{\zeta}_i\] 
    We do not claim any obvious ordering on the $\hat{\zeta}_i$, but they are all certainly larger than $\alpha$. If the $\zeta_i$ converge to $\alpha$, then the result follows as well. So assume to the contrary that the $\zeta_i$ converge to some $S<\alpha$.

    The function $x+v_i(x)$ is strictly increasing from $1-b_i$ to $2-b_i$ on the interval $[\zeta_i,\hat{\zeta}_i]$. It follows now that on the interval $[S,\alpha]$ each $x+v_i(x)$ increases by less than one, and so the average value of $1+v'_i(x)$ must be less than $1/(\alpha-S)$.

    But if we compute (on the interval $[S,\alpha]$):
    \begin{align*}
        \left(x + v_{i+1}(x)\right)' &= 1 + \left( \frac{x}{x+(b_{i+1}-1)+v_i(x)}\right)'\\
        &=1 + \frac{(b_{i+1}-1)+v_i(x)-xv_i'(x))}{\left(x+(b_{i+1}-1)+v_i(x)\right)^2}\\
        &=1+  \frac{1}{x+(b_{i+1}-1)+v_i(x)}+\frac{-x(v'_i(x)+1)}{\left(x+(b_{i+1}-1)+v_i(x)\right)^2}.\\
        \intertext{Since $x<0$ and $v'_i>0$ we replace the increasing denominator with its value at $\alpha$:}
        &>\frac{1}{\alpha+b_{i+1}-1+v_i(\alpha)} + \frac{-x(v'_i+1)}{\left(\alpha+b_{i+1}-1+v_i(\alpha)\right)^2}.\\
        \intertext{From \Cref{eqn: function b} we now have:}
        &>\frac{1}{\left\{\alpha+v_i(\alpha)\right\}} + \frac{-xv'_i(x)}{\left\{\alpha+v_i(\alpha)\right\}^2}\\
        &> (-\alpha) \cdot \inf_{x \in [S,\alpha]}(v'_i(x)).
    \end{align*}
    It now follows that if we denote $C \geq 1$ as the infimum of $(x+v_0(x))'$ on $[S,\alpha]$, that for $\alpha<-1$ we have for all $i$ and all $x \in [S,\alpha]$
    \[(x+v_i(x))' > C(-\alpha)^i.\]
    We have already remarked that the average value of $1+v'_i(x)$ on $[S,\alpha]$ must be no larger than $1/(\alpha-S)$ for all $i$, contradicting the above and completing the proof.
\end{proof}

\begin{remark}
    Computational evidence suggests that \Cref{th:dense} may be extended to $(-\infty,0]$, which is somewhat supported by \Cref{lem: subtree roots to zero}. The same proof as presented above applies if one may prove that $\{\alpha+v_i(\alpha)\}$ is not bounded away from zero, at least for a dense set of $\alpha \in (-1,0)$.
\end{remark}

\section{Backward Farey Tree}\label{sec:other trees}
The results we have on the Farey polynomial tree raise the question of how general these results are: there are many other different families of continued fraction algorithms. In this section we take the backward continued fractions as a starting point and compare it with Theorem \ref{th:cfs} and Theorem \ref{th:dense}. 
Just as for regular continued fractions, the backward continued fractions appears in different fields of mathematics. There are many nice papers studying backwards continued fractions, but we want to highlight \cite{MO19,MO20} where backwards (and regular) continued fractions are used to draw connections between graph theory, combinatorics, number theory, and algebra. 
For backward continued fractions, a parametrised family is already studied in the form of $ 1/(u(1-x)) \mod 1$ where $u\in(0,4)$, introduced and studied in \cite{GH96a,GH96b} and more recently \cite{LS20, LS22}. In contrast to those papers, in this article we will make the branches full again by shifting the digit set in order to get a better comparison with the  Farey polynomial tree. Of course, instead of having  $ 1/(u(1-x)) \mod 1$, we can also put the parameter in the numerator. This leads to the fast map 
\begin{equation}
    T_x(t)=\frac{x}{1-t}- x -\left\lfloor \frac{x}{1-t}-x  \right\rfloor.
\end{equation}
The continued fractions are of the form 
\[
t=1-\cfrac{x}{x + a_1-\cfrac{x}{x+ a_2 -\ddots}}
\]
where $a_i\in\mathbb{Z^+}$, given by
\[a(t)=1+\floor{\frac{x}{1-t}-x}.\] Note that, when taking $x\in \mathbb{Z^+}$, you essentially get $N$-continued fractions with a negative value for $N$. 
The slow map is given by
\begin{equation}\label{eq:F_x backwards}
  F_x(t)=
\begin{cases}
\dfrac{xt}{1-t}   & t\in [0,\frac{1}{x+1})\\[1ex]
1-\displaystyle\frac{x(t-1)}{1-t-x}, & t\in [\frac{1}{x+1},1],
\end{cases}  
\end{equation}
 see Figure \ref{fig:Fareymapbackward}.

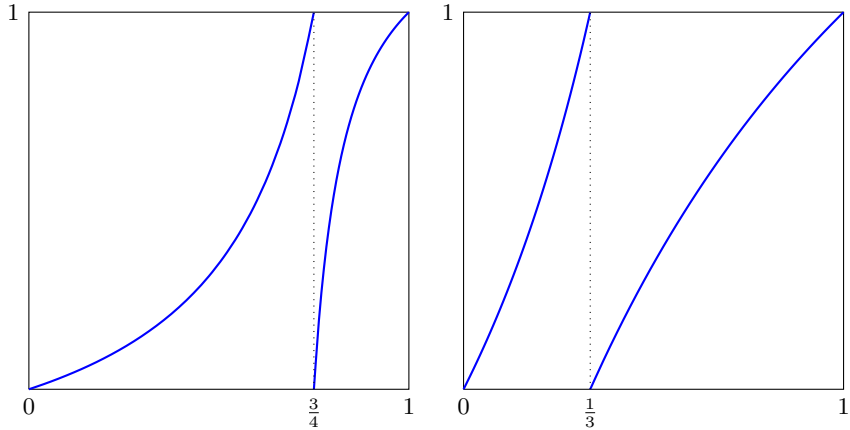
\begin{figure}[ht]
		\centering
		
		\subfigure{\begin{tikzpicture}[scale=5]
				\draw[white] (-0.25,0)--(1,0);
				\draw(0,0)node[below]{\small $0$}--(1,0)node[below]{\small $1$}--(1,1)--(0,1)node[left]{\small $1$}--(0,0);

				\draw[thick, blue, smooth, samples =20, domain=3/4:1] plot(\x,{1-(\x-1)/(2-3*\x)});
				\draw[thick,blue, smooth, samples =20, domain=0:3/4] plot(\x,{\x/(3-3*\x)});

				\draw[dotted](3/4,0)node[below]{\small $\frac{3}{4}$}--(3/4,1);
				
		\end{tikzpicture}}
        \subfigure{\begin{tikzpicture}[scale=5]
				\draw(0,0)node[below]{\small $0$}--(1,0)node[below]{\small $1$}--(1,1)--(0,1)node[left]{\small $1$}--(0,0);

				\draw[thick, blue, smooth, samples =20, domain=1/3:1] plot(\x,{1-(2*\x-2)/(-\x-1)});
				\draw[thick,blue, smooth, samples =20, domain=0:1/3] plot(\x,{(2*\x)/(1-\x)});

				\draw[dotted](1/3,0)node[below]{\small $\frac{1}{3}$}--(1/3,1);
				
		\end{tikzpicture}}
		
		\caption{The map $F_x$ for $x=1/3$ on the left and $x=2$ on the right. \hspace{5.5em} }
		\label{fig:Fareymapbackward}
	\end{figure}

For the corresponding tree, which we will call the backward tree, we use the discontinuity as the root. Here, $\Phi_1$ and $\Phi_0$ are given by the inverse branches of $F_x$ which results in

\begin{equation}\label{eqn: Phibackward}\begin{split}
\Phi_0\left( \frac{p(x)}{q(x)}\right) &= \frac{(x-1)p(x)+q(x)}{(x+1)q(x)-p(x)} \\
\\
\Phi_1\left( \frac{p(x)}{q(x)} \right) &=\frac{p(x)}{xq(x)+p(x)}, \end{split}\end{equation}
 see Figure \ref{fig:FareyPolybackward} for the first 4 levels. Note that we need to induce on the left interval (i.e. $[0,\frac{1}{x+1})$) to get the fast map. Concerning the analogy with Theorem \ref{th:cfs} we find a relatively similar result.

\begin{figure}[h]
    \begin{center}

\begin{tikzpicture}[
    level 1/.style = {sibling distance=7.5cm},
    level 2/.style = {sibling distance=4cm},
    level 3/.style = {sibling distance=2cm},
    level distance          = 1.5cm,
    edge from parent/.style = {draw},
    scale=1.2
    ]

    \node {$\frac{1}{x+1}$}
    child{
      node {$\frac{1}{x^2+x+1}$}
      child{
        node {$\frac{1}{x^3+x^2+x+1}$}
        child{
          node {$\frac{1}{x^4+x^3+x^2+x+1}$}
        }
        child{
          node {$\frac{x^2+x+2}{x^3+2x^2+2x+2}$}
        }
      }
      child{
        node {$\frac{x+2}{x^2+2x+2}$}
        child{
          node {$\frac{x+2}{x^3+2x^2+3x+2}$}
        }
        child{
          node {$\frac{2x+3}{x^2+3x+3}$}
        }
      }
    }
    child{
      node {$\frac{2}{x+2}$}
      child{
        node {$\frac{2}{x^2+2x+2}$}
        child{
          node {$\frac{2}{x^3+2x^2+2x+2}$}
        }
        child{
          node {$\frac{x+4}{x^2+3x+4}$}
        }
      }
      child{
        node {$\frac{3}{x+3}$}
        child{
          node {$\frac{3}{x^2+3x+3}$}
        }
        child{
          node {$\frac{4}{x+4}$}
        }
      }
    };
\end{tikzpicture}
     
        \caption{The first four levels general backward Farey tree. The fractions are simplified.}\label{fig:FareyPolybackward}
    \end{center}
\end{figure}
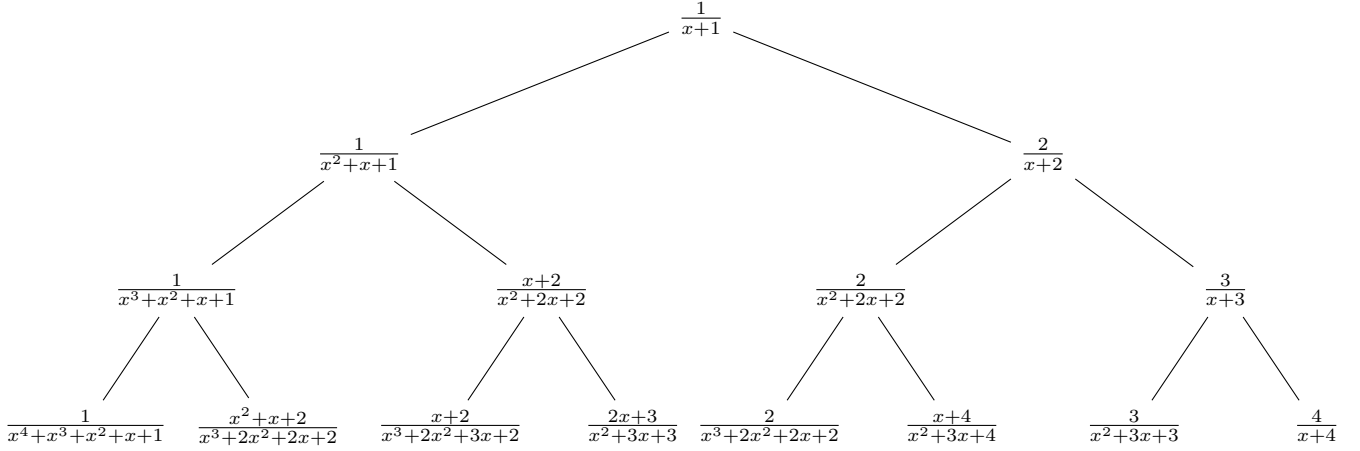

\begin{theorem}\label{th:backwardcf}
    For a fixed value of $x$, we consider the function which maps the backward tree to the set of values given by evaluating every vertex at this value of $x$.
    \begin{enumerate}
        \item Setting a value $x\geq 1$ injectively maps the backward tree to a dense subset of $(0,1)$. \\For $x\in(0,1)$, the mapping is injective and maps to a subset of $(0,1)$ but the image is not dense.
        \item Setting a value $x \geq 1$ bijectively maps the tree to $\mathbb{Q} \cap (0,1)$ if and only if $x \in \mathbb{Z}^+$.
    \end{enumerate}
\end{theorem}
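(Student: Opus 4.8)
The plan is to mirror the structure of the proof of Theorem~\ref{th:cfs} as closely as possible, exploiting the explicit dictionary between the backward tree and the backward $x$-continued fractions. First I would verify the continued fraction bookkeeping: reading off from \eqref{eqn: Phibackward}, one should check that $\Phi_1$ corresponds to incrementing the first digit $a_1$ (exactly as $\Phi_1(t)=\tfrac{xt}{x+t}$ did in the forward case, since $\tfrac{x}{1-t}$ composed with $\Phi_1$ subtracts one from $a_1$), while $\Phi_0$ corresponds to prepending a digit. With this one gets a labelling $[a_1,\dots,a_k]_x$ of the vertices, a root at the discontinuity $t=\tfrac{1}{x+1}$ (equivalently $[1]_x$), and $p_n,q_n$ satisfying a recurrence analogous to \eqref{eq:rec for pn and qn} but with a sign change reflecting that $t=1-x/(x+a_1-\cdots)$; this puts us in position to talk about convergents and finite expansions.

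Next I would establish part (1). The inclusion of the image in $(0,1)$ and injectivity come for free from the fact that $\Phi_0,\Phi_1$ are inverse branches of $F_x$ acting on $[0,1]$, so no two vertices in the tree can evaluate to the same point (the tree is an orbit tree under a noninvertible map whose branches are injective with disjoint images). For density when $x\ge 1$ I would prove convergence of the backward continued fractions: since $x\ge 1$ the map $F_x$ is expanding on the relevant branch and the classical backward-continued-fraction argument (the $q_n$ grow, the cylinder sets shrink) applies essentially verbatim, as in \cite{L40}. For $x\in(0,1)$ the key point is that the tree misses a genuine interval: because the left branch $F_{x,1}(t)=\tfrac{xt}{1-t}$ has slope $x<1$ at the origin and the right branch is the expanding one, the images of the two inverse branches do not cover a neighbourhood of $0$ the way they must for density — concretely the leftmost vertices $[a_1]_x=\tfrac{1}{x^{a_1}+\cdots}$ are bounded away from $0$, and more generally one can exhibit a subinterval of $(0,1)$ never hit. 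This is the part where the backward tree genuinely differs from the forward one, so I would want a clean self-contained argument identifying an omitted interval rather than just invoking "not uniformly expanding."

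Then part (2): for $x\in\mathbb{Z}^+$ I would run the same numerator-descent argument as in Proposition~\ref{prop:finite}. Writing $T_x(t_n/s_n)$ with $x$ an integer, one gets $T_x(t_n/s_n)=\tfrac{s_n x-a(s_n-t_n)}{s_n-t_n}$ or the analogous expression coming from $\tfrac{x}{1-t}-x-\lfloor\cdot\rfloor$, and the new numerator is $s_n-t_n<s_n$ (before cancellation), so the numerators strictly decrease and the expansion terminates — hence every rational in $(0,1)$ appears, and by injectivity it appears exactly once. For $x=p/q\in\mathbb{Q}\setminus\mathbb{N}$ the same mod-$q$ obstruction as in Proposition~\ref{prop:finite} shows certain rationals (those of the form $t_0/s_0$ with $q\mid s_0$, $q\nmid t_0$) never reach numerator zero, so the map is not onto $\mathbb{Q}\cap(0,1)$. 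The only subtlety is checking that the mod-$q$ computation still produces a nonzero residue after the sign change in the backward recurrence; I expect it to, since the relevant identity is $t_{n+1}\equiv \pm\hat p\, t_n \pmod q$ up to a unit, but this is the arithmetic step I would double-check carefully.

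The main obstacle I anticipate is the non-density claim for $x\in(0,1)$: unlike everything else here, it has no analogue in the forward case (where Theorem~\ref{th:cfs}(1) gives density for all $x>0$) and it cannot be obtained by copying a classical argument. I would attack it by directly computing the closure of the image — the two inverse branches $\Phi_0,\Phi_1$ have images that are closed subintervals of $[0,1]$, and because $\Phi_1$ contracts near $0$ with multiplier $x<1$ while the root sits at $\tfrac{1}{x+1}$, iterating shows the union of all cylinders stabilises to a Cantor-like set (or at least leaves a gap) rather than filling $(0,1)$; pinning down an explicit omitted interval, e.g. something like $(F_{x,1}(\tfrac{1}{x+1}),\,\tfrac{1}{x+1})$-type gaps that never get refilled, is the crux.
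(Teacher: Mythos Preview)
Your overall plan matches the paper's: injectivity from disjoint images of the inverse branches, density for $x\ge 1$ from convergence of the backward continued fractions, and the mod-$q$ obstruction for $x\in\mathbb{Q}\setminus\mathbb{N}$. Two points need correction, one of them the very step you flagged as the crux.

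\textbf{Non-density for $x\in(0,1)$.} Your proposed mechanism is wrong: the two inverse branches satisfy $\Phi_1([0,1])=[0,\tfrac{1}{x+1}]$ and $\Phi_0([0,1])=[\tfrac{1}{x+1},1]$, so their images \emph{do} cover all of $[0,1]$, including every neighbourhood of $0$; there is no first-level gap to iterate. The paper's argument is much simpler than the Cantor-set picture you sketch. The left branch $F_{x,1}(t)=\tfrac{xt}{1-t}$ fixes $0$ with $F_{x,1}'(0)=x<1$, so $0$ is attracting; the other fixed point of $F_{x,1}$ is $1-x$, which lies in $(0,\tfrac{1}{x+1})$ and is repelling. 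Hence $(0,1-x)$ is forward-invariant under $F_x$ and every orbit starting there tends to $0$ without ever reaching $\tfrac{1}{x+1}$. Since each tree vertex $v$ satisfies $F_x^N(v)=\tfrac{1}{x+1}$ for some $N$, no tree vertex can lie in $(0,1-x)$, and this open interval witnesses non-density. Your observation that the leftmost path $\Phi_1^n(\tfrac{1}{x+1})=1/\sum_{j=0}^{n+1}x^j$ accumulates at $1-x$ is exactly the boundary of this basin, so you were one step away.

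\textbf{The descent argument for $x\in\mathbb{Z}^+$.} This is \emph{not} a numerator descent. From $T_x(t_n/s_n)=\tfrac{xt_n-(a-1)(s_n-t_n)}{s_n-t_n}$ (your displayed formula has $s_nx$ where it should be $t_nx$), the new \emph{denominator} is $s_{n+1}\le s_n-t_n<s_n$, and it is the denominators that strictly decrease to force termination; the numerators need not be monotone. The paper makes exactly this point as the distinguishing feature from the forward case. Your mod-$q$ step for $x=p/q\notin\mathbb{N}$ is correct in spirit: one gets $t_{n+1}\equiv p\,t_n\pmod q$, so if $q\mid s_0$ and $q\nmid t_0$ the numerator is never $0\pmod q$.
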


To  prove this we will use the following two propositions.

\begin{proposition}\label{prop:convergenceback}
    For $x>0$, we have convergence of $p_n(t)/q_n(t)\rightarrow t$ for all $t\in(0,1)$ if and only if  $x \geq 1$.
\end{proposition}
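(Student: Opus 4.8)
The plan is to follow the same template as the proof of \Cref{prop: convergence for 0<x<1}: first record the convergent recurrences together with a determinant identity, then an exact remainder formula, and finally treat the two directions of the equivalence separately. Let $c_n(t)=p_n/q_n$ be the $n$-th convergent of the backward expansion $t=1-\cfrac{x}{x+a_1-\cfrac{x}{x+a_2-\ddots}}$, obtained by composing the maps $w\mapsto x/(x+a_k-w)$ with $w\mapsto 1-w$ (the backward analogue of the bookkeeping behind \eqref{eq:rec for pn and qn}). One reads off $p_{-1}=1$, $q_{-1}=0$, $p_0=q_0=1$, $p_1=a_1$, $q_1=x+a_1$, the recurrences
\[p_n=(x+a_n)p_{n-1}-xp_{n-2},\qquad q_n=(x+a_n)q_{n-1}-xq_{n-2}\qquad(n\geq1),\]
and the identity $p_nq_{n-1}-p_{n-1}q_n=-x^n$ (the minus signs tracing the subtractions in the backward expansion). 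A one-line induction valid for every $x>0$, using $a_n\geq1$ so $x+a_n-1\geq x$, gives
\[q_n-q_{n-1}\;\geq\;x\,(q_{n-1}-q_{n-2})\;\geq\;\cdots\;\geq\;x^{n-1}(q_1-q_0)\;\geq\;x^n\;>\;0;\]
in particular $(q_n)$ is strictly increasing and $q_n\geq 1$.

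Next I would establish an exact remainder formula. From $T_x(s)=\frac{x}{1-s}-x-(a(s)-1)$ one has the finite identity $s=1-\frac{x}{x+a(s)-1+T_x(s)}$ for every $s\in(0,1)$; iterating it $n$ times and unwinding the resulting finite continued fraction through the same maps gives
\[t=\frac{p_n-p_{n-1}w_n}{q_n-q_{n-1}w_n},\qquad w_n:=1-T_x^n(t)\in(0,1].\]
Since $w_n\leq 1$ and $q_{n-1}<q_n$ the denominator is positive, and combining with $p_nq_{n-1}-p_{n-1}q_n=-x^n$ and $q_n-q_{n-1}\geq x^n$,
\[t-c_n=\frac{-w_n\,x^n}{(q_n-w_nq_{n-1})\,q_n},\qquad\text{hence}\qquad |t-c_n|\;\leq\;\frac{x^n}{(q_n-q_{n-1})\,q_n}\;\leq\;\frac{1}{q_n}\]
(if the expansion of $t$ terminates, then $w_n=1$ and $t=c_n$ exactly). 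This reduces the proposition to the single question of whether $q_n\to\infty$ is forced.

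For $x\geq1$ it is: $q_n=1+\sum_{k=1}^n(q_k-q_{k-1})\geq 1+x+\cdots+x^n\to\infty$, so $|t-c_n|\leq 1/q_n\to 0$ for every $t\in(0,1)$. For the converse it suffices to exhibit, for each $x\in(0,1)$, one $t$ at which convergence fails: take any $t\in(0,1-x)$, say $t=(1-x)/2$. On this interval $\frac{x}{1-t}\in(x,1)$, so $a(t)=1$ and $T_x(t)=\frac{xt}{1-t}<1-x$; thus $T_x$ maps $(0,1-x)$ into itself, every digit of $t$ equals $1$, and (since $T_x^n(t)>0$ for all $n$) the expansion is genuinely infinite. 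For the constant digit string $[1,1,1,\dots]$ the recurrence has characteristic roots $1$ and $x$, so $p_n\equiv 1$ and $q_n=1+x+\cdots+x^n=\frac{1-x^{n+1}}{1-x}$, whence $p_n/q_n=\frac{1-x}{1-x^{n+1}}\to 1-x\neq t$.

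The real content is the converse, and its point is a little subtle. A bounded sequence $(q_n)$ does \emph{not} by itself obstruct convergence: the fixed point $t=1-x$ also has $q_n\to\frac{1}{1-x}$, yet there $p_n/q_n\to 1-x=t$. What actually goes wrong when $x<1$ is that the branch $t\mapsto\frac{xt}{1-t}$ of $F_x$ contracts toward $0$, so the algorithm $T_x$ assigns the \emph{same} expansion $[1,1,1,\dots]_x$ to every point of the whole interval $(0,1-x)$, and the convergents of that expansion can only converge to its own value $1-x$. Locating this interval and identifying the common limit is the crux. The exact remainder formula of the second step is the other place requiring care, but it is a routine unwinding of a finite continued fraction and parallels the forward case.
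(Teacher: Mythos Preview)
Your proof is correct and follows essentially the same approach as the paper: both derive the bound $|t-p_n/q_n|\le x^n/\bigl(q_n(q_n-q_{n-1})\bigr)$, reduce the $x\ge 1$ case to the worst-case digit string $a_n\equiv 1$, and handle $x\in(0,1)$ via the attracting behaviour of $F_x$ near $0$. Your treatment of the converse is in fact more explicit than the paper's --- you compute the convergents for the constant string and exhibit the wrong limit $1-x$, whereas the paper simply asserts that the attracting fixed point ``prevents convergence.''
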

\begin{proof}
  
In analogy with \cite{LS22}, we get the formulas 

\begin{equation}\label{eq:rec for pn and qn backward}
    \begin{aligned}
        p_{0}=1,&\, p_1=a_1, & p_n=(x+a_n)p_{n-1}-xp_{n-2},~\text{for } n\geq 1,\\
    q_{0}=1, &\, q_1=a_1+x, & q_n=(x+a_n)q_{n-1}-xq_{n-2},~\text{for } n\geq 1.
    \end{aligned}
\end{equation}
and
\begin{equation}
    \left| t-\frac{p_n}{q_n} \right|\leq\frac{x^n}{q_n(q_n-q_{n-1})}.
\end{equation}
Just as in the proof of \Cref{prop: convergence for 0<x<1}, we achieve the worst convergence when all $a_n=1$, which gives $q_n=\sum_{j=0}^n x^j$ and $q_n-q_{n-1}=x^n$. Substituting this in the right hand side gives 
\[
\frac{x^n}{q_n(q_n-q_{n-1})}=\frac{x^n}{x^n\sum_{j=0}^n x^j}=\frac{1}{\sum_{j=0}^n x^j}
\]
which goes to zero when $n$ goes to infinity if $x \geq 1$. Note that for $x\in(0,1)$, the map $F_x(t)$ has an attracting fixed point. Indeed, $F_x^\prime(0)=x$, which results in $F_x^n(t)\rightarrow 0$ (but never reaching zero) as $n\rightarrow \infty$ for all $t\in(0,1-x)$ which prevents convergence.
\end{proof}

We would like to remark that the range of parameters for which we find convergence is different from the previously referenced papers where the branches were not full. In those references, the convergence is guaranteed for $u\in(0,4)$ which translates to $x\in(\frac{1}{4},\infty)$ when putting the parameter in the numerator. This is because in the non-full branch case, the sequences of digits with the worst convergence are not admissible.
\begin{proposition}\label{prop:finiteback}
    For $x\in\mathbb{Q}_{>0}$, all rationals in $(0,1)$ have a finite backward $x$-expansion if and only if $x\in\mathbb{N}$.  
\end{proposition}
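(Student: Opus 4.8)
The plan is to mirror the proof of \Cref{prop:finite}, only now tracking the reduced \emph{denominator} of the orbit of a rational under the backward fast map $T_x$ rather than the numerator.

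\textbf{The ``if'' direction.} Suppose $x\in\mathbb{N}$. For $t_0/s_0\in(0,1)$ write $T_x^n(t_0/s_0)=t_n/s_n$ in lowest terms, as long as this is defined and nonzero; note that $a(t_n/s_n)-1$ is a nonnegative integer. A direct computation from the defining formula of $T_x$ gives
\[
T_x\!\left(\frac{t_n}{s_n}\right)=\frac{x\,t_n-\bigl(a(t_n/s_n)-1\bigr)(s_n-t_n)}{s_n-t_n},
\]
so the reduced denominator $s_{n+1}$ divides $s_n-t_n$. Since $t_n/s_n\in(0,1)$ forces $t_n\ge1$, this yields $s_{n+1}\le s_n-t_n<s_n$. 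Hence the $s_n$ strictly decrease as long as the orbit avoids $0$; being positive integers they cannot do so forever, so $T_x^n(t_0/s_0)=0$ for some finite $n$, i.e.\ the backward $x$-expansion of $t_0/s_0$ is finite.

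\textbf{The ``only if'' direction} (contrapositive). Suppose $x=p/q$ with $\gcd(p,q)=1$ and $q\ge2$. I would show that the rational $1/q\in(0,1)$ has an infinite expansion, by proving that $q\mid s_n$ for every $n$, where $T_x^n(1/q)=t_n/s_n$ in lowest terms. The base case $n=0$ is immediate since $s_0=q$. For the inductive step, the same computation as above (now keeping $q$ general) gives
\[
T_x\!\left(\frac{t_n}{s_n}\right)=\frac{p\,t_n-\bigl(a(t_n/s_n)-1\bigr)q(s_n-t_n)}{q(s_n-t_n)},
\]
so $s_{n+1}=q(s_n-t_n)/g$ where $g=\gcd\!\bigl(p\,t_n-(a(t_n/s_n)-1)q(s_n-t_n),\,q(s_n-t_n)\bigr)$. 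The crucial point is that $\gcd(g,q)=1$: any common prime factor $\ell$ of $g$ and $q$ would divide $p\,t_n$, hence divide $t_n$ (as $\gcd(p,q)=1$), and would also divide $s_n$ by the inductive hypothesis, contradicting $\gcd(t_n,s_n)=1$. Then $\gcd(g,q)=1$ together with $g\mid q(s_n-t_n)$ forces $g\mid s_n-t_n$, so $s_{n+1}=q\cdot\frac{s_n-t_n}{g}$ is a multiple of $q$. This completes the induction, and since $s_n\ge q\ge2$ for all $n$ the orbit of $1/q$ never reaches $0$ (whose reduced denominator is $1$); thus its backward $x$-expansion is infinite, so not every rational in $(0,1)$ has a finite expansion.

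The two displayed identities for $T_x(t_n/s_n)$ are routine algebra; the single step that needs care is the coprimality $\gcd(g,q)=1$ in the converse direction, i.e.\ verifying that passing to lowest terms cannot destroy the invariant $q\mid s_n$ --- this is precisely where $\gcd(p,q)=1$ and the reducedness of $t_n/s_n$ enter. As remarked after \Cref{prop:finite}, the statement is genuinely about rational $x$: for irrational $x$ the orbit of a rational is generically irrational, so that case is of a different nature and is not claimed here.
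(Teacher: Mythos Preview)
Your proof is correct and follows essentially the same strategy as the paper's: in the ``if'' direction both track the denominator $s_n-t_n$ and use strict decrease, and in the ``only if'' direction both maintain the invariant $q\mid s_n$ (equivalently, numerator $\not\equiv 0\bmod q$) along the orbit of a rational with $q\mid s_0$. Your handling of the reduction to lowest terms via the explicit $\gcd(g,q)=1$ argument is in fact more carefully spelled out than the paper's corresponding passage.
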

\begin{proof}
Let $\frac{t_0}{s_0}\in(0,1)$ coprime and set $T_x^n(\frac{t_0}{s_0})=\frac{t_n}{s_n}$ for any $n$ it makes sense.\\ For $x\in \mathbb{N}$ we have the following.
\[
T_x\left(\frac{t_n}{s_n}\right)=\frac{t_n x-(a-1)(s_n-t_n)}{s_n-t_n}=\frac{t_{n+1}}{s_{n+1}}.
\]
This gives us $s_{n+1}=s_n-t_n<s_n$ even without dividing out common divisors. Note that here we took the denominators instead of the numerators as for the forward case (and in fact the sequence of numerators does not need to be strictly decreasing). 
Therefore the denominators decrease when applying $T_x$ and will be $0$ at some point.\\
For $x\in\mathbb{Q}\backslash \mathbb{N}$ we will do the same as the forward case. The only difference is that the computation will be slightly different. We write $x=p/q$ with $p,q$ coprime and assume that $t_n,s_n$ are coprime.
\[
T_x\left(\frac{t_n}{s_n}\right)=\frac{t_n p-(a-1)(s_n-t_n) q}{(s_n-t_n)q}=\frac{t_{n+1}}{s_{n+1}}.
\]
Now for $s_n\equiv 0 \mod q$ we find that $t_{n+1} \mod q \equiv  t_np \mod q \not\equiv 0 \mod q $ and $s_{n+1}\equiv 0 \mod q$. In other words, if we start with $s_0$ divisible by $q$ and $t_0$ not divisible by $q$ the numerators of any point in the orbit will never be $0 \mod q$ and therefore never can be $0$. (On the other hand the denominators will always be divisible by $q$.)
\end{proof}

\begin{proof}[Proof of Theorem \ref{th:backwardcf}]
Point (1) immediately follows from the fact that the inverse branches of $F_x$ map to disjoint sets, and then Proposition \ref{prop:convergenceback} gives denseness. Point (2) follows immediately from Proposition~\ref{prop:finiteback}. 
\end{proof}

When we look at the roots of $p(x)$ and $q(x)$ of the vertices in the backward tree instead of the forward tree, the story is very different. From \Cref{eqn: Phi} we see that the roots of $p(x)$ and $q(x)$ appearing in the forward tree are the same set with the unique exception that $x=0$ is never a root of $q(x)$. For the backward tree we cannot immediately conclude that the roots of $p,q$ have any obvious relationship. Also, for the forward Farey tree we found only real roots that are dense in $(-\infty,-1]$. For the backward Farey tree we find infinitely many complex roots, both for $p(x)$ as for $q(x)$, see Figure \ref{fig:rootsbackward}. A simple proof shows the following:

\begin{proposition}
    The roots of $q(x)$ are dense on the unit circle.
\end{proposition}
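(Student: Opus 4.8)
The plan is to track a single branch of the backward tree and show that the leading behaviour of the denominators forces their roots toward the unit circle. From \eqref{eqn: Phibackward}, applying $\Phi_1$ repeatedly to the root $1/(x+1)$ produces the vertices $[1,1,\ldots,1]_x$, whose denominators are exactly $q_n(x)=\sum_{j=0}^n x^j = (x^{n+1}-1)/(x-1)$, as already noted in the proof of \Cref{prop:convergenceback}. The roots of $q_n$ are therefore precisely the $(n+1)$-st roots of unity other than $1$. Since for each $n$ these are $n$ equally spaced points on the unit circle, the union over all $n$ is dense in the unit circle. This already proves the statement, since these $q_n$ all occur as denominators of vertices in the tree.

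First I would make the identification $\Phi_1^{n}(1/(x+1)) = 1/\sum_{j=0}^{n} x^j$ precise by an easy induction: if $p/q = 1/q$ with numerator $1$, then $\Phi_1(1/q) = 1/(xq+1)$, so starting from $q=x+1$ we get $q = x(x+1)+1 = x^2+x+1$, and in general $q_{n} = x q_{n-1}+1$, whose solution with $q_0 = x+1$ (or, reindexing, $q_{-1}=1$) is the geometric sum. Next I would factor $q_n(x)(x-1) = x^{n+1}-1$ to conclude that the roots of $q_n$ are the primitive and non-primitive $(n+1)$-st roots of unity except $z=1$; in particular they are $n$ points evenly spaced by angle $2\pi/(n+1)$ on the unit circle. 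Finally, density follows because any arc of the unit circle of length $\varepsilon$ contains one of these points as soon as $2\pi/(n+1) < \varepsilon$.

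I do not expect a genuine obstacle here: the only thing to be slightly careful about is the reindexing between the ``level in the tree'' count and the ``$n$'' in the recurrence \eqref{eq:rec for pn and qn backward}, and the fact that the displayed fractions in \Cref{fig:FareyPolybackward} are written in lowest terms (so one should confirm that $\sum_{j=0}^n x^j$ is genuinely the denominator appearing, not a proper multiple of it — which is clear since the numerator along this branch is the constant $1$, already coprime to everything). If one wanted the stronger statement that \emph{every} accumulation point of roots of denominators across the whole tree lies on or inside the unit disk, or a description of the full root set (as suggested by \Cref{fig:rootsbackward}), that would require controlling all branches and is presumably why the proposition is stated only for density on the circle via this one explicit family.
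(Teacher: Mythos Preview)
Your proposal is correct and follows essentially the same argument as the paper: both identify the $\Phi_1$-branch as giving denominators $\sum_{j=0}^{m} x^j$, factor via $x^{m+1}-1$, and conclude density from the equidistributed roots of unity. There is a harmless off-by-one inconsistency in your indexing (you write $\Phi_1^{n}(1/(x+1)) = 1/\sum_{j=0}^{n} x^j$ but your recursion with $q_0=x+1$ actually gives $\sum_{j=0}^{n+1} x^j$), but this does not affect the argument.
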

\begin{proof}
    Note that $\Phi_1^n(1/(x+1))=1/(\sum_{j=0}^{n+1}x^j)$. Now $(x-1)\sum_{j=0}^{n+1}x^j=x^{n+2}-1$ so that the roots of $\sum_{j=0}^{n+1}x^j$ are $\{x= e^{(k\pi i)/n} : k\in \{1,\ldots ,n+1\} \}$. Of course, we then get a dense subset of the unit circle if we take the union over all $n\in \mathbb{Z}^+$.
\end{proof}
\begin{figure}
    \centering
    \includegraphics[width=0.45\linewidth]{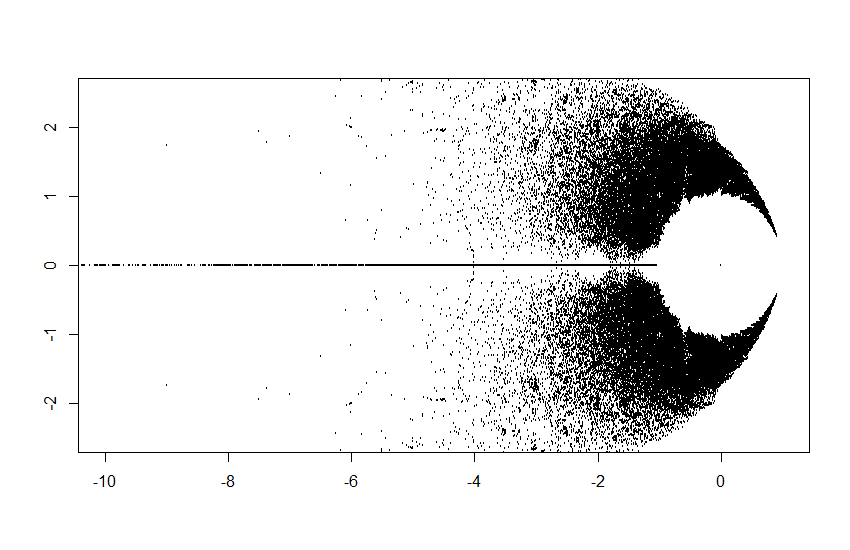}
    \includegraphics[width=0.45\linewidth]{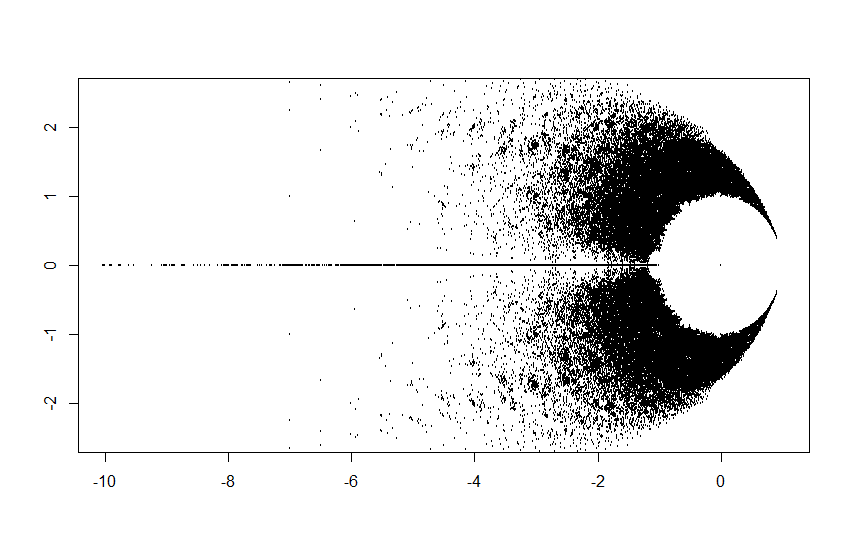}
    \caption{The roots of the vertices in the complex plane for $p(x)$ on the left and for $q(x)$ on the right. The first 15 levels of the tree are used.}
    \label{fig:rootsbackward}
\end{figure}

\bibliographystyle{alpha}
\bibliography{biblio}
\end{document}